\newtheorem{D}{Definition}
\newtheorem{T}{Theorem}
\newtheorem{C}{Corollary}
\newtheorem{Conj}{Conjecture}
\newtheorem{Pro}{Problem}
\newtheorem{prop}{Proposition}
\newcommand {\relabel}[1] {\label{#1} \red{[#1]}} \newcommand {\rebibitem}[1] {\bibitem{#1} \red{[#1]}}
\def\relabel {\label} \def\rebibitem {\bibitem}
\begin{document}

\title{
Properties of chromatic polynomials of hypergraphs not held for chromatic polynomials of graphs\thanks{This paper was partially supported by NTU AcRf Project RP 3/16 DFM of Singapore.}
}
\author{
Ruixue Zhang, 
Fengming Dong\thanks{Corresponding author.
Email: fengming.dong@nie.edu.sg}\\
\small Mathematics and Mathematics Education\\
\small National Institute of Education\\
\small Nanyang Technological University, Singapore 637616\\
}
\date{}

\maketitle

\def \calv {\mathcal{V}}
\def \cali {\mathcal{I}}
\def \calh {\mathcal{H}}
\def \cale {\mathcal{E}}
\def \sets {\mathcal{S}} 

\def \calho {\mathcal{H}_{\bullet G}}

\newcommand \calhd[1] {\calh_{\bullet #1}}

\begin{abstract}
In this paper, we present some properties on 
chromatic polynomials of hypergraphs 
which do not hold for chromatic polynomials of graphs.
We first show that chromatic polynomials of hypergraphs 
have all integers as their zeros
and contain dense real zeros in the set of real numbers.
We then prove that for any multigraph $G=(V,E)$, 
the number of totally cyclic orientations of $G$ is equal to 
the value of $|P(\calh_G,-1)|$, where $P(\calh_G,\lambda)$  
is the chromatic polynomial of a hypergraph $\calh_G$ 
which is constructed from $G$.
Finally we show that 
the multiplicity of root ``$0$" of $P(\calh,\lambda)$ 
may be at least $2$ for some connected hypergraphs $\calh$,
and the multiplicity of root ``$1$" of $P(\calh,\lambda)$ 
may be $1$ for some connected and separable hypergraphs $\calh$
and may be $2$ for some connected and non-separable 
hypergraphs $\calh$.
\end{abstract}

\section{Introduction and main results}

For any graph $G=(V,E)$, the {\it chromatic polynomial} of $G$ 
is the function $P(G,\lambda)$ such that
for any positive integer $\lambda$,  
$P(G,\lambda)$ is the number of 
proper $\lambda$-colourings
of $G$,
where a proper $\lambda$-colouring of $G$ is a mapping 
$\phi:V\rightarrow \{1,2,\cdots,\lambda\}$ 
such that $\phi(u)\ne \phi(v)$ holds for each pair of 
adjacent vertices $u$ and $v$ in $G$.
This graph-function $P(G,\lambda)$ was originally introduced by 
Birkhoff~\cite{bir1912} in 1912 in the hope of proving 
the four-color theorem
(i.e., $P(G,4)>0$ holds for any loopless planar graph $G$).

A {\it hypergraph $\calh$} consists of an order pair of vertex set $\mathcal{V}$ and edge set $\cale$,
where $\cale$ is a subset of 
$\{e\subseteq \calv: |e|\ge 1\}$.
If $|e|\le 2$ for all $e\in \cale$, then 
$\calh$ is a graph. 
For any integer $\lambda\geq 1$, 
a \textit{weak proper $\lambda$-colouring} of 
a hypergraph $\calh=(\calv, \cale)$ 
is a mapping $\phi: \calv \rightarrow \{1,2,\cdots,\lambda\}$ 
such that $|\{\phi(v): v\in e\}|>1$ holds for each $e\in \cale$
(see \cite{Jones 1976, Jones 1976b, Vol 2002}). 
Thus $\calh$ does not have any weak proper $\lambda$-colouring
if $|e|=1$ for some edge $e\in \cale$.
A \textit{strong proper $\lambda$-colouring} of 
$\calh=(\calv, \cale)$ 
is a mapping $\phi: \calv \rightarrow \{1,2,\cdots,\lambda\}$ 
such that $|\{\phi(v): v\in e\}|=|e|$ 
holds for each $e\in \cale$
(see \cite{Jones 1976, Jones 1976b,  Vol 2002} also). 

Note that 
the number of distinct 
strong proper $\lambda$-colourings 
in $\calh=(\calv, \cale)$ is equal to 
$P(G,\lambda)$,
where $G$ is the simple graph with vertex set $\calv$ 
in which any two distinct vertices $u,v\in \calv$ are adjacent 
if and only if 
they are contained in an edge $e\in \cale$ in $\calh$
(i.e., $G$ is obtained from $\calh$ by changing 
each edge in $\calh$ to a clique in $G$).
Thus the function counting the number 
of strong proper $\lambda$-colourings in $\calh$ 
is not different from the chromatic polynomial of a graph.
It is probably for this reason that 
most articles on chromatic polynomials of hypergraphs 
in the past two decades focused on the function 
counting the number of weak proper $\lambda$-colourings in 
a hypergraph $\calh$
(see \cite{Allagan 2007, Allagan 2011, Allagan 2014, 
Borowiecki 2000, Dohmen 1995,
Tomescu 1998, Tomescu 2004, Tomescu 2009, Tomescu 2014,
Vol 2002, Walter 2009}).

Let $P(\calh,\lambda)$ be the number of 
weak proper $\lambda$-colourings of $\calh$. 
It is obvious that this graph-function $P(\calh,\lambda)$
is an extension of the chromatic polynomial of a graph. 
In this paper, $P(\calh,\lambda)$ is called 
\textit{the chromatic polynomial of $\calh$}, and it is indeed a polynomial in $\lambda$ of degree $|\calv|$. 
This graph-function 
$P(\calh,\lambda)$ 
appeared in the work of Helgason \cite{H} in 1972, and 
it is unknown if it had been introduced earlier.
It  may have been noticed to be a polynomial by 
Chv\'atal~\cite{chv 1970}.
It has been studied extensively in the past twenty years 
by many researchers, such as 
Allagan \cite{Allagan 2007, 
Allagan 2011, Allagan 2014}, 
Borowiecki and \L{}azuka~\cite{Borowiecki 2000},
Dohmen~\cite{Dohmen 1995},
Tomescu~\cite{Tomescu 1998, Tomescu 2004,
Tomescu 2009, Tomescu 2014},
Voloshin~\cite{Vol 2002} and 
Walter \cite{Walter 2009}.
They extended many properties of 
chromatic polynomials of graphs 
on computations, expressions, factorizations, etc, 
to chromatic polynomials of hypergraphs.

The following are some known 
properties on chromatic polynomials
of graphs which 
also hold for chromatic polynomials of hypergraphs:
\begin{enumerate}
\renewcommand{\theenumi}{\rm (A.\arabic{enumi})}
\item Deletion/Contraction property 
\cite{rea1, rea2, Jones 1976, whi1932b}, 
cited as Theorem 5;

\item (by definition) multiplicativity with respect to disjoint unions, cited as Proposition 2;

\item  
the factorization formula 
for the chromatic polynomial of a graph with a cut-set 
which induced a clique
\cite{Borowiecki 2000, Jones 1976, zyk}, cited as Theorem 6; 

\item 
the roots of chromatic polynomials of graphs 
are dense in the complex plane
\cite{sokal 2004}; 
\item   
(as the family of hypergraphs include 
all graphs) 
the real roots of chromatic polynomials of graphs  
are dense in the interval $[32/27,\infty)$
(see \cite{jac1, tho2});

\item  
Whitney's Broken-cycle Theorem~\cite{whi1932b}, 
extended by Dohmen~\cite{Dohmen 1995}
to the set of linear hypergraphs which do not contain
any edge with an odd size and  
contain edges of size $2$ in every cycle;

\item  
for fixed $\lambda$, computing $P(H,\lambda)$ 
is polynomial time computable for classes
of graphs (hypergraphs) of bounded tree width. 
The proof also works for hypergraphs
\cite{Makowsky 2006};

\item 
for any graph $G$ of order $n$, we have 
$P(G,\lambda)=\lambda^n
+a_{n-1}\lambda^{n-1}+\cdots+a_1\lambda$,
where $a_{i}=\sum_{j\ge 0}(-1)^jN(i,j)$
and $N(i, j)$ denotes the number of 
spanning subgraphs of 
$G$ with $i$ components and $j$ edges
\cite{big1993, birk1946, D, rea1, rea2, whi1932b}.
Tomescu \cite{Tomescu 1998} 
showed that this result also holds 
chromatic polynomials of hypergraphs
(see \cite{Dohmen 1995} also); 

\item  
$0$ is a root of every chromatic polynomial 
and all positive integers are roots of chromatic polynomials
\cite{big1993, birk1946, Dohmen 1995, D, 
rea1, rea2, Tomescu 1998, whi1932b}.
\end{enumerate}

But chromatic polynomials of graphs also have the following
properties on its coefficients 
not held  for chromatic polynomials
of hypergraphs:
\begin{enumerate}
\renewcommand{\theenumi}{\rm (B.\arabic{enumi})}

\item 
for any graph $G$ of order $n$ and component number $c$,
if $P(G,\lambda)=\lambda^n
+a_{n-1}\lambda^{n-1}+\cdots+a_1\lambda$,
then $a_i\ne 0$  if and only if $c\le i\le n$
(see \cite{big1993, birk1946, D, rea1, rea2, whi1932b}). 
Instead, for a hypergraph $\calh=(\calv,\cale)$ of order $n$
with $|e|\ge 3$ for all $e\in \cale$,
(A.8) implies that 
if $P(\calh,\lambda)=\lambda^n
+b_{n-1}\lambda^{n-1}+\cdots+b_1\lambda$,
then  $b_{n-i}\ne 0$ 
but $b_{n-j}=0$ for all $j$ with $1\le j<i$, 
where $i=\min_{e\in \cale}|e|-1$
(see \cite{Dohmen 1995, Tomescu 1998});
\label{PN12}
\item  
the coefficients $1,a_{n-1},\cdots,a_c$ in 
the expansion of $P(G,\lambda)$ alternate in signs
(see \cite{big1993, birk1946,  D, rea1, rea2, whi1932b}).
Instead, for any linear hypergraph $\calh=(\calv,\cale)$ 
(i.e., $|e_1\cap e_2|\le 1$ for all distinct $e_1,e_2\in \cale$)
with $|\calv|=n$ and $h=\min_{e\in \cale}|e|\ge 3$, 
if $\calh$ contains edges of size $h+1$
and $P(\calh,\lambda)=\lambda^n
+b_{n-1}\lambda^{n-1}+\cdots+b_1\lambda$,
then (A.8) implies that $b_{n-h+1}$ and $b_{n-h}$ 
have the same sign
(see \cite{Dohmen 1995, Tomescu 1998});

\item 
the sequence $1,|a_{n-1}|,\cdots,|a_2|, |a_1|$
is log-concave
(i.e., $a_j^2\ge |a_{j-1}|\cdot |a_{j+1}|$ holds for all $j$), where $a_i$'s are coefficients 
of terms of $P(G,\lambda)$ given in (B.1)
(see \cite{huh1, huh2, rea1, rea2}).
Instead, for a hypergraph $\calh$, 
if $P(\calh,\lambda)=\lambda^n
+b_{n-1}\lambda^{n-1}+\cdots+b_1\lambda$,
by (B.1),  it is possible that $b_{n-j}=0$ while 
$b_{n-k}\ne 0$ and $b_{n-i}\ne 0$ for some integers $k,j,i$
with $0\le k<j<i$
(see \cite{Dohmen 1995, Tomescu 1998}). 
\end{enumerate}

In this article, we will 
present some properties on chromatic polynomials of hypergraphs
which are different from the following properties on 
chromatic polynomials of graphs:
\begin{enumerate}
\renewcommand{\theenumi}{\rm (\roman{enumi})}
\item  
$(-\infty,0)$, $(0,1)$ and $(1,32/27]$ 
are zero-free intervals 
for chromatic polynomials of graphs \cite{jac1, rea2, tho2},
while chromatic polynomials of hypergraphs have dense real zeros 
in the whole set of real numbers, as stated in 
Corollary~\ref{mainco1} (a);
\item  for any graph $G$,
$|P(G, -1)|$ counts the number 
of acyclic orientations (i.e., orientations 
without any directed cycle) of a multigraph $G$ 
(see \cite{stanley}),
while $|P(\calh_G, -1)|$ counts the number 
of totally cyclic orientations of $G$
(i.e., orientations in which each arc is contained in 
some directed cycle),
as stated in Corollary~\ref{mainco2},
where  $\calh_G$ is obtained from $G$ by 
adding $|E|$ new vertices $\{w_{e}: e\in E\}$  
and changing each edge $e$ in $G$ 
to an edge $\{u_e,v_e,w_{e}\}$ in $\calh_G$,
where $u_e$ and $v_e$ are the two ends of $e$ in $G$;

\item 
the multiplicity of root `$0$' of 
$P(G,\lambda)$ counts the number of components of a graph $G$,
and thus $P(G,\lambda)$ has no factor $\lambda^2$ whenever 
$G$ is connected (see \cite{D, eis1972, rea1, rea2}),
while $P(\calh,\lambda)$ may have a factor $\lambda^2$ 
for a connected 
hypergraph $\calh$, as stated in Theorem~\ref{main3}; 
\item for a connected graph $G$, 
$P(G,\lambda)$ has a factor $(\lambda-1)^2$ 
if and only if $G$ is separable (see \cite{whi1984, woo1977}), while  $P(\calh,\lambda)$ may have no 
factor $(\lambda-1)^2$ for a connected and 
separable hypergraph $\calh$, 
as stated in Theorem~\ref{main4-0}.
\end{enumerate}

For any simple graph $G=(V,E)$ 
(i.e., $G$ has no loops nor parallel edges), 
let $\calho$ be the 
hypergraph with vertex set 
$\calv=V\cup \{w\}$ 
and edge set 
$\cale=\{\{u,v,w\}: uv\in E\}$, 
i.e., $\calho$ is obtained from $G$ 
by adding a new vertex $w$ 
and changing each edge $uv$ in $G$ 
to an edge $\{u,v,w\}$ in $\calho$.

{\it The independence polynomial} of a graph $G$
is defined to be $I(G,x)=\sum_{A} x^{|A|}$,
where the sum runs over all independent sets $A$ of $G$.
One of the main purposes in this article 
is to establish a relation between 
$P(\calho,\lambda)$ and $I(G,x)$
in the following theorem. 

\begin{T} \relabel{main1}
For any simple graph $G$ of order $n$, 
\begin{equation}\relabel{main1-eq1}
P(\calho,\lambda)=\lambda (\lambda-1)^n 
I(G,1/(\lambda-1)).
\end{equation}
\end{T}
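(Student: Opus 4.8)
The plan is to count the weak proper $\lambda$-colourings of $\calho$ directly for each fixed integer $\lambda\ge 2$, and then observe that both sides of~\eqref{main1-eq1} are polynomials in $\lambda$, so that an identity holding at infinitely many integer values forces equality of the polynomials. (The right-hand side is a genuine polynomial because $(\lambda-1)^n I(G,1/(\lambda-1))=\sum_A(\lambda-1)^{n-|A|}$ and $|A|\le n$ for every independent set $A$.)

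First I would exploit the symmetry among the $\lambda$ colours at the single added vertex $w$. Permuting the colour set is a bijection on the collection of weak proper colourings, so the number of colourings $\phi$ with $\phi(w)$ equal to a prescribed colour is the same for each colour; hence $P(\calho,\lambda)=\lambda N$, where $N$ denotes the number of weak proper $\lambda$-colourings with $\phi(w)=1$.

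Next I would translate the weak-properness condition into a condition on $G$. Every edge of $\calho$ has the form $\{u,v,w\}$ with $uv\in E$, and weak-properness demands that $\phi(u),\phi(v),\phi(w)$ are not all equal. Once $\phi(w)=1$ is fixed, this can fail only when $\phi(u)=\phi(v)=1$, so the sole restriction on the colouring of $V$ is that no edge of $G$ receives colour $1$ at both ends. Writing $A=\{v\in V:\phi(v)=1\}$, this says exactly that $A$ is an independent set of $G$.

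Finally I would assemble the count by grouping colourings according to the set $A$ of vertices they assign colour $1$. For a fixed independent set $A$, the remaining $n-|A|$ vertices must each take one of the colours $2,\dots,\lambda$, giving $(\lambda-1)^{n-|A|}$ colourings, with no overcounting because $A$ is required to be the full preimage of colour $1$. Summing over all independent sets $A$ gives
\[
N=\sum_{A}(\lambda-1)^{n-|A|}=(\lambda-1)^{n}\sum_{A}(\lambda-1)^{-|A|}=(\lambda-1)^{n}\,I\!\left(G,1/(\lambda-1)\right),
\]
so $P(\calho,\lambda)=\lambda N$ yields~\eqref{main1-eq1} for every integer $\lambda\ge 2$, and hence identically. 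The argument is essentially mechanical; the only point needing care is the bookkeeping in the grouping step---insisting that $A$ be \emph{exactly} the preimage of colour $1$ rather than merely a subset of it---so the main (and modest) obstacle is stating this bijection cleanly rather than any genuine difficulty.
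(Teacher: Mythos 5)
Your proof is correct, and it takes a genuinely different route from the paper's. The paper proceeds by induction on $|V|+|E|$ with a minimal counterexample: it first derives the vertex recursion $P(\calho,\lambda)=(\lambda-1)P(\calhd{G-\{v\}},\lambda)+(\lambda-1)^{d(v)}P(\calhd{G-N[v]},\lambda)$ (Proposition~\ref{lem1}, itself obtained from the addition--identification identity and the clique-cut factorization of Theorem~\ref{t2}), and then matches it term by term against the parallel recursion $I(G,x)=I(G-\{v\},x)+xI(G-N[v],x)$ for the independence polynomial (Proposition~\ref{p3.1}). You instead count directly: by colour symmetry $P(\calho,\lambda)=\lambda N$ with $N$ the number of weak proper colourings fixing $\phi(w)=1$; since every edge of $\calho$ contains $w$, weak properness then reduces exactly to the statement that the preimage of colour $1$ in $V$ is an independent set of $G$, and partitioning by that preimage gives $N=\sum_A(\lambda-1)^{n-|A|}=(\lambda-1)^nI(G,1/(\lambda-1))$; the polynomial identity follows since both sides agree at all integers $\lambda\ge 2$. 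Your argument is shorter, self-contained (it needs none of the deletion--contraction machinery of Theorems~\ref{t1} and~\ref{t2}), and explains combinatorially \emph{why} the independence polynomial appears, whereas the paper's route produces Proposition~\ref{lem1} as a byproduct. Your bookkeeping is sound: you correctly insist that $A$ be the exact preimage of colour $1$ (so the empty independent set accounts for colourings avoiding colour $1$ on $V$, matching the constant term $1$ of $I(G,x)$), and the passage from integer evaluations to a polynomial identity is legitimate since $\lambda(\lambda-1)^nI(G,1/(\lambda-1))=\lambda\sum_A(\lambda-1)^{n-|A|}$ is indeed a polynomial in $\lambda$.
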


Theorem~\ref{main1} implies 
that the multiplicity of root ``$1$" in $P(\calho,\lambda)$
is $n-\alpha(G)$, where $\alpha(G)$ is the 
independence number of $G$,
and whenever $z$ is a zero of $I(G,x)$, 
$1+1/z$ is a zero of $P(\calho,\lambda)$.
If $G$ is the complete graph $K_n$, then 
$I(G,x)=1+nx$ and by Theorem~\ref{main1}, 
$1-n$ is a zero of $P(\calho,\lambda)$.

Brown, Hickman and Nowakowski~\cite{B} 
showed that real roots of independence polynomials are dense in $(-\infty,0]$ while the complex roots 
of these polynomials
are dense in $\mathbb{C}$ (i.e. the whole complex plane). Chudnovsky and Seymour~\cite{M} proved that if $G$ is clawfree, then all the roots of its independence polynomial are real. 
By Theorem~\ref{main1} and results in \cite{B,M}, 
we have the following conclusions immediately
except Corollary~\ref{mainco1} (b) whose proof 
will be given in Section 3.


\begin{C}\relabel{mainco1}
\begin{enumerate}
\item[(a)] The complex roots of $P(\calho,\lambda)$ 
for all graphs $G$ are dense in the whole complex plane;
\item[(b)] The real roots of $P(\calho,\lambda)$ 
for all graphs $G$ 
are dense in the set of real numbers;
\item[(c)] Every negative integer is a root of 
$P(\calho,\lambda)$  for some graph $G$;
\item[(d)] If $G$ is clawfree, then all roots of 
$P(\calho,\lambda)$ are real.
\end{enumerate}
\end{C}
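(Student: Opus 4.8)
Parts (a), (c) and (d) I would read off directly from Theorem~\ref{main1} and the results of \cite{B,M}. Writing $I(G,x)=\sum_{k\ge 0}i_kx^{k}$ with $i_0=1$, equation~\eqref{main1-eq1} expands to $P(\calho,\lambda)=\lambda\sum_{k\ge 0}i_k(\lambda-1)^{n-k}$, so apart from $\lambda=0$ and $\lambda=1$ every root of $P(\calho,\lambda)$ has the form $\lambda=1+1/z$ with $z$ a root of $I(G,\cdot)$, and conversely. The map $\mu(z)=1+1/z$ is a M\"obius transformation, hence a homeomorphism of the Riemann sphere with $\mu(0)=\infty$ and $\mu(\infty)=1$. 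For (a), \cite{B} gives that the roots of independence polynomials are dense in $\mathbb{C}$, so their images under the homeomorphism $\mu$ are dense in $\mathbb{C}\setminus\{1\}$, hence in $\mathbb{C}$. For (c), taking $G=K_n$ gives $I(K_n,x)=1+nx$ with root $z=-1/n$, hence the root $\lambda=1-n$ of $P(\calhd{K_n},\lambda)$; as $n$ runs through $\{2,3,\dots\}$ this realises every negative integer. For (d), if $G$ is clawfree then by \cite{M} all roots $z$ of $I(G,\cdot)$ are real, so all the numbers $1+1/z$ are real, and together with $0$ and $1$ this makes every root of $P(\calho,\lambda)$ real.

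For (b) the target is density on the \emph{whole} line, and I would reach it by using two complementary families of hypergraphs, one for each half of $\mathbb{R}$. The real roots of $I(G,\cdot)$ are negative and, by \cite{B}, dense in $(-\infty,0)$; applying the decreasing bijection $z\mapsto 1+1/z$ of $(-\infty,0)$ onto $(-\infty,1)$, the corresponding roots of $P(\calho,\lambda)$ are dense in $(-\infty,1)$, and $\lambda=1$ is itself a root whenever $G$ has an edge. Thus the family $\{\calho\}$ already makes the real roots dense in $(-\infty,1]$. This family cannot do more, since $P(\calho,\lambda)=\lambda(\lambda-1)^{n}I(G,1/(\lambda-1))>0$ for every $\lambda>1$; so to fill $[1,\infty)$ I adjoin a second family.

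For integers $m\ge 1$ and $k\ge 1$ let $\calh_{m,k}$ be the hypergraph on $\{1,\dots,m\}\cup\{a_1,\dots,a_k\}$ whose edges are all pairs $\{i,j\}$ with $1\le i<j\le m$ together with the $m$ edges $\{i,a_1,\dots,a_k\}$, $1\le i\le m$. Colouring $a_1,\dots,a_k$ first and splitting on whether they all receive the same colour, a short count gives \[P(\calh_{m,k},\lambda)=\Big(\prod_{i=0}^{m-1}(\lambda-i)\Big)(\lambda^{k}-m).\] The factor $\lambda^{k}-m$ contributes the real root $\lambda=m^{1/k}\in[1,\infty)$, and $\{m^{1/k}:m\ge 2,\ k\ge 1\}$ is dense in $(1,\infty)$: given $t>1$ and $\varepsilon>0$, take $k$ large and $m$ the nearest integer to $t^{k}$, so that $m^{1/k}\to t$. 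Hence the real roots of $\{\calh_{m,k}\}$ are dense in $[1,\infty)$. Combining the two families, the real chromatic roots of hypergraphs — those of $\calho$ over all graphs $G$ together with those of $\calh_{m,k}$ — are dense in $(-\infty,1]\cup[1,\infty)=\mathbb{R}$, which is assertion (b).

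The main obstacle is precisely the crossing of the point $\lambda=1$: the correspondence $\lambda=1+1/z$ confines the roots coming from $\calho$ to $(-\infty,1]$, and the interval $(1,32/27)$ is zero-free even for ordinary graphs, so the positive half-line must be supplied by genuinely non-graph hypergraphs with controllable real roots. The device that makes this work is the clique-plus-apex family $\calh_{m,k}$, whose chromatic polynomial factors through $\lambda^{k}-m$; the one computation needing care is this factorisation, i.e.\ checking that after conditioning on the colouring of the apex vertices the clique constraints and the hyperedge constraints decouple, giving the two cases $\lambda\prod_{i=1}^{m}(\lambda-i)$ and $(\lambda^{k}-\lambda)\prod_{i=0}^{m-1}(\lambda-i)$ whose sum is the displayed product. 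Everything else is the elementary density of $\{m^{1/k}\}$ in $[1,\infty)$.
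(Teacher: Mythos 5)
Your proposal is correct, and for parts (a), (c) and (d) it coincides with the paper, which obtains these ``immediately'' from Theorem~\ref{main1} together with \cite{B,M}; your M\"obius-transformation phrasing of the correspondence $\lambda=1+1/z$ is just a careful spelling-out of that step. For part (b) you take a genuinely different route. The paper also starts from density of independence roots in $(-\infty,0]$, hence density of the roots of $P(\calho,\lambda)$ in $(-\infty,1]$, but it then crosses the barrier at $\lambda=1$ with the join identity $P(\calh+K_1,\lambda)=\lambda P(\calh,\lambda-1)$ (Corollary~\ref{pro3-co1}, a consequence of Proposition~\ref{pro3}): applying it to $\calho$ shifts the dense set to $(-\infty,2]$, and iterating (or invoking the known density of chromatic roots of graphs in $[2,\infty)$) covers all of $\mathbb{R}$. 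You instead manufacture $[1,\infty)$ from scratch with the clique-plus-apex family $\calh_{m,k}$; your factorisation $P(\calh_{m,k},\lambda)=\bigl(\prod_{i=0}^{m-1}(\lambda-i)\bigr)(\lambda^{k}-m)$ checks out --- conditioning on whether the apex vertices are monochromatic gives $\lambda\prod_{i=1}^{m}(\lambda-i)+(\lambda^{k}-\lambda)\prod_{i=0}^{m-1}(\lambda-i)$, which collapses to the displayed product --- and the roots $m^{1/k}$ are indeed dense in $(1,\infty)$. What each approach buys: the paper's shift argument is shorter and reuses machinery (Proposition~\ref{pro3}) it needs anyway, producing density in every $(-\infty,N]$ from a single iterated construction; yours is self-contained, needing neither Corollary~\ref{pro3-co1} nor Thomassen's density theorem, and produces explicit algebraic roots, in particular exhibiting hypergraph chromatic roots concretely inside the graph zero-free interval $(1,32/27]$. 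One point of parity worth noting: as you correctly observe, $P(\calho,\lambda)>0$ for all $\lambda>1$, so the literal wording of (b) cannot hold for the family $\calho$ alone; the paper's own proof silently passes to the larger family $\calho+K_1$, $\calho+K_1+K_1,\dots$, exactly as you pass to $\calh_{m,k}$, so both arguments prove the intended assertion that the real roots of chromatic polynomials of hypergraphs are dense in $\mathbb{R}$.
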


For a multigraph $G=(V,E)$, 
let $\calh_G=(\calv,\cale)$ be another hypergraph 
constructed from $G$, 
where $\calv=V\cup \{w_e: e\in E\}$ and 
$\cale=\{\{u_e,v_e,w_e\}: e\in E\}$, 
where $u_e$ and $v_e$ are the two ends of $e$ in $G$,
i.e., $\calh_G$ is obtained from $G$ by 
adding $|E|$ new vertices $\{w_{e}: e\in E\}$  
and changing each edge $e$ in $G$ 
to an edge $\{u_e,v_e,w_{e}\}$ in $\calh_G$.
We will express 
$P(\calh_G,\lambda)$ by the Tutte polynomial $T_G(x,y)$ of $G=(V,E)$, where 
\begin{equation}\relabel{tuttep}
T_G(x,y)=\sum_{A\subseteq E}(x-1)^{r(E)-r(A)}(y-1)^{|A|-r(A)},
\end{equation}
$r(A)=|V|-c(A)$ and $c(A)$ is the number of components 
of the spanning subgraph $(V,A)$ of $G$
for any $A\subseteq E$.

\begin{T}\relabel{main2}
For any multigraph $G=(V,E)$ with order $n$ and size $m$, 
\begin{equation}\relabel{main2-eq1}
P(\calh_{G},\lambda)=\lambda^{m-n+2c(G)}
\cdot (-1)^{n+c(G)}\cdot 
T_G(1-\lambda^{2},(\lambda-1)/\lambda),
\end{equation}
where $c(G)$ is the number of components of $G$,
i.e., $c(G)=c(E)$.
\end{T}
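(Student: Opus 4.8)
The plan is to compute $P(\calh_G,\lambda)$ directly by first colouring the original vertices $V$ and then counting the admissible colours for the added vertices $\{w_e:e\in E\}$, and afterwards to match the resulting sum-over-subgraphs expression against the definition (\ref{tuttep}) of $T_G$. First I would condition on a map $\psi\colon V\to\{1,\dots,\lambda\}$ and observe that a weak proper colouring of $\calh_G$ is exactly an extension of $\psi$ to the vertices $w_e$ that avoids, for every $e\in E$, the monochromatic event $\phi(u_e)=\phi(v_e)=\phi(w_e)$. For a fixed $\psi$ the colour of each $w_e$ is chosen independently of the others, and the hyperedge $\{u_e,v_e,w_e\}$ is admissible for all $\lambda$ choices of $\phi(w_e)$ when $\psi(u_e)\ne\psi(v_e)$, but only for $\lambda-1$ choices when $\psi(u_e)=\psi(v_e)$. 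Writing $\delta_e(\psi)=1$ if $\psi(u_e)=\psi(v_e)$ and $\delta_e(\psi)=0$ otherwise, this gives
\begin{equation*}
P(\calh_G,\lambda)=\sum_{\psi\colon V\to\{1,\dots,\lambda\}}\ \prod_{e\in E}\bigl(\lambda-\delta_e(\psi)\bigr).
\end{equation*}

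Next I would expand the product over $e$ and interchange the two summations. Expanding yields $\prod_{e\in E}(\lambda-\delta_e)=\sum_{A\subseteq E}(-1)^{|A|}\lambda^{m-|A|}\prod_{e\in A}\delta_e$, and since $\prod_{e\in A}\delta_e(\psi)=1$ precisely when $\psi$ is constant on each component of the spanning subgraph $(V,A)$, the inner sum $\sum_{\psi}\prod_{e\in A}\delta_e$ equals $\lambda^{c(A)}$. Hence
\begin{equation*}
P(\calh_G,\lambda)=\sum_{A\subseteq E}(-1)^{|A|}\lambda^{\,m-|A|+c(A)}.
\end{equation*}
This reduction to a signed sum over spanning subgraphs is the one genuinely structural step; everything after it is bookkeeping.

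Finally I would compare this with $T_G$ evaluated at $x=1-\lambda^{2}$ and $y=(\lambda-1)/\lambda$. Under this substitution $x-1=-\lambda^{2}$ and $y-1=-1/\lambda$, so each term of (\ref{tuttep}) becomes $(-1)^{r(E)+|A|}\lambda^{\,2r(E)-r(A)-|A|}$ once $(-1)^{-2r(A)}=1$ is absorbed. Multiplying by the prefactor $\lambda^{m-n+2c(G)}(-1)^{n+c(G)}$ and using $r(E)=n-c(G)$ turns the sign $(-1)^{n+c(G)+r(E)}$ into $(-1)^{2n}=1$ and the exponent $m-n+2c(G)+2r(E)$ into $m+n$, so the right-hand side of (\ref{main2-eq1}) collapses to $\sum_{A\subseteq E}(-1)^{|A|}\lambda^{\,m+n-r(A)-|A|}$. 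Recalling $c(A)=|V|-r(A)=n-r(A)$, this is identical to the expression for $P(\calh_G,\lambda)$ obtained above, which establishes (\ref{main2-eq1}). I expect the only delicate point to be keeping the signs and $\lambda$-exponents consistent across this last substitution; conceptually the whole argument rests on the conditioning step that converts the hypergraph count into the weighted graph-colouring sum.
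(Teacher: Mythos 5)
Your proof is correct, but it takes a genuinely different route from the paper. The paper proves Theorem~\ref{main2} by induction on $m$ via deletion--contraction: it first establishes the recursion $P(\calh_{G},\lambda)=\lambda P(\calh_{G-e},\lambda)-P(\calh_{G/e},\lambda)$ together with the special formulas for empty graphs, loops and bridges, and then matches these case by case against the corresponding recursions for $T_G$ from Proposition~\ref{p2}. You instead give a direct Whitney-rank-expansion argument: conditioning on the restriction $\psi$ of a colouring to $V$, noting that each $w_e$ lies in exactly one hyperedge so its admissible colours number $\lambda-\delta_e(\psi)$ independently of the other $w_{e'}$ (and that this count remains valid for loops, where $\delta_e\equiv 1$), expanding the product over $A\subseteq E$, and using $\sum_\psi\prod_{e\in A}\delta_e(\psi)=\lambda^{c(A)}$ to arrive at $P(\calh_G,\lambda)=\sum_{A\subseteq E}(-1)^{|A|}\lambda^{m-|A|+c(A)}$, which you then verify coincides term by term with the right-hand side of (\ref{main2-eq1}) under the substitution $x-1=-\lambda^2$, $y-1=-1/\lambda$ and $r(E)=n-c(G)$; I have checked the sign and exponent bookkeeping and it is consistent. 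Your approach is self-contained (it needs neither the auxiliary Propositions~\ref{l1}--\ref{p2} nor the deletion--contraction theorem for hypergraphs) and has the advantage of explaining \emph{why} the evaluation point $(1-\lambda^{2},(\lambda-1)/\lambda)$ appears, since it falls directly out of matching the subset expansion; the paper's induction, by contrast, only verifies the identity once it is guessed, but fits more naturally with the recursive machinery used elsewhere in the paper.
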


Stanley~\cite{stanley} showed that for any multigraph $G$, 
$|P(G,-1)|=T_G(2,0)$
counts the number of acyclic orientations 
of $G$, where an acyclic orientation of $G$ 
is an orientation of $G$ such that 
the digraph obtained does not have any directed cycle. 
By Theorem~\ref{main2}, 
the number of totally cyclic orientations of $G$ 
(i.e., orientations of $G$ on which each arc is in some 
directed cycle)
can be determined by the value of $|P(\calh_{G},-1)|$.

\begin{C}\relabel{mainco2}
For any multigraph $G$, 
$|P(\calh_{G},-1)|=T_G(0,2)$
counts the number of totally cyclic orientations 
of $G$.
\end{C}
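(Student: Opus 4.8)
The plan is to derive the identity $|P(\calh_{G},-1)|=T_G(0,2)$ by a direct substitution of $\lambda=-1$ into the formula of Theorem~\ref{main2}, and then to invoke the known combinatorial interpretation of the Tutte evaluation $T_G(0,2)$. First I would evaluate the two arguments of the Tutte polynomial in \eqref{main2-eq1} at $\lambda=-1$. Since $1-\lambda^{2}=1-1=0$ and $(\lambda-1)/\lambda=(-2)/(-1)=2$, the factor $T_G(1-\lambda^{2},(\lambda-1)/\lambda)$ collapses precisely to $T_G(0,2)$. This is the whole point of evaluating at $\lambda=-1$: it lands exactly on the distinguished point $(0,2)$ of the Tutte plane.

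Next I would track the sign prefactor. At $\lambda=-1$ we have $\lambda^{m-n+2c(G)}=(-1)^{m-n+2c(G)}=(-1)^{m-n}$, because $2c(G)$ is even, while the remaining factor is $(-1)^{n+c(G)}$. Multiplying the two gives
\[
P(\calh_{G},-1)=(-1)^{m-n}\,(-1)^{n+c(G)}\,T_G(0,2)=(-1)^{m+c(G)}\,T_G(0,2).
\]
Taking absolute values, and using that $T_G(0,2)$ is a nonnegative integer (being a count, as recalled in the next paragraph), yields $|P(\calh_{G},-1)|=T_G(0,2)$ at once.

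Finally, the genuinely combinatorial content is supplied not by the evaluation but by the classical fact that $T_G(0,2)$ equals the number of totally cyclic orientations of $G$ (an orientation in which every arc lies on a directed cycle). This is a standard evaluation of the Tutte polynomial, parallel to Stanley's result $T_G(2,0)=|P(G,-1)|$ for acyclic orientations already quoted in the excerpt; indeed, for planar $G$ the two interpretations are related by planar duality together with $T_G(x,y)=T_{G^{*}}(y,x)$, and the general statement is well known. I would simply cite this result rather than reprove it, and combined with the sign computation above this completes the corollary.

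I expect no genuine obstacle here: once Theorem~\ref{main2} is in hand the substitution is routine, and the only nontrivial ingredient—that $T_G(0,2)$ counts totally cyclic orientations—is a classical theorem that may be quoted directly. The single point demanding a little care is the bookkeeping of signs, namely checking that the product $(-1)^{m-n}(-1)^{n+c(G)}=(-1)^{m+c(G)}$ is a unit of modulus $1$, so that passing to absolute values removes it cleanly and leaves exactly $T_G(0,2)$.
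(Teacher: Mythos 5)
Your proof is correct and follows exactly the route the paper takes: evaluate the formula of Theorem~\ref{main2} at $\lambda=-1$, observe that the Tutte arguments become $(0,2)$, note the prefactor is $\pm1$, and quote the classical fact that $T_G(0,2)$ counts totally cyclic orientations (the paper's reference list includes Greene--Zaslavsky for this). No gaps.
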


It is well known that for any connected graph $G$,
$P(G,\lambda)$ has a factor $\lambda$ but no 
factor $\lambda^2$ (see \cite{D, eis1972, rea1, rea2, whi1932b}).
However, 
$P(\calh,\lambda)$ may have a factor $\lambda^2$
for a connected hypergraph $\calh$, as stated 
in Theorem~\ref{main3}..

A hypergraph $\calh=(\calv,\cale)$ is 
said to be {\it connected}
if for any two vertices $v_1,v_2$ in $\calh$, 
there exists a sequence of edges $e_0, e_1, \cdots,e_k$ 
in $\calh$ such that 
$v_1\in e_0$, $v_2\in e_k$ and 
$e_i\cap e_{i+1}\ne \emptyset$ holds for all 
$i=0,1,\cdots,k-1$.
Now assume that $\calh$ is connected. 
An edge $e$ in $\calh$ is 
called as a {\it bridge} of $\calh$
if $\calh-e$ (i.e., the hypergraph obtained from 
$\calh$ by removing $e$) 
is disconnected.
Let $B(\calh)$ be the set of bridges of $\calh$.
If $\calh$ is a graph (i.e., $|e|=2$ for all $e\in \cale$), then the spanning subgraph 
$(\calv, B(\calh))$ is connected if and only if 
$B(\calh)=\cale$ and $\calh$ is a tree.
However, if $\calh$ has some edge $e$ with $|e|\ge 3$,  
it is possible that $(\calv, B(\calh))$ is connected 
while $B(\calh)$ is a proper subset of $\cale$.
Such an example is given in Figure~\ref{f1} (a).

\begin{figure}[htbp]
  \centering
\includegraphics[scale=0.6]{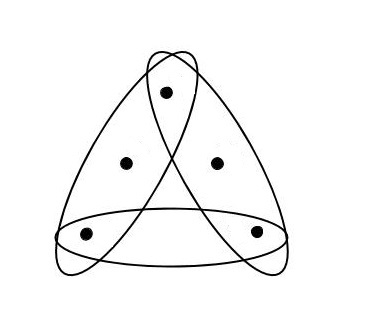}
\hspace{1 cm}
\includegraphics[scale=0.6]{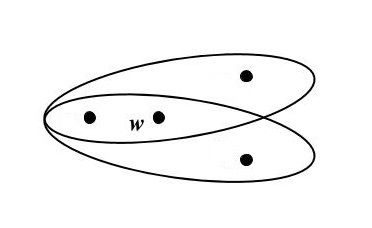}

(a) \hspace{5 cm} (b)
  \caption{Two hypergraphs}
\relabel{f1}
\end{figure}

\begin{T}\relabel{main3}
Let $\calh=(\calv,\cale)$ be any connected hypergraph.
If $B(\calh)$ is a proper subset of $\cale$
and the sub-hypergraph $(\calv, B(\calh))$ is connected,
then $\lambda^2$ is a factor of $P(\calh,\lambda)$.
\end{T}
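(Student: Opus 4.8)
The plan is to invoke the coefficient expansion (A.8) and reduce the whole statement to the vanishing of a single coefficient. Writing $P(\calh,\lambda)=\lambda^n+a_{n-1}\lambda^{n-1}+\cdots+a_1\lambda$, the polynomial has no constant term, so $\lambda^2$ divides $P(\calh,\lambda)$ if and only if $a_1=0$. By (A.8), $a_1=\sum_{j\ge 0}(-1)^jN(1,j)$, where $N(1,j)$ counts the spanning sub-hypergraphs with exactly one component and $j$ edges. Since a spanning sub-hypergraph $(\calv,A)$ has exactly one component precisely when it is connected, I would rewrite this as
\[
a_1=\sum_{\substack{A\subseteq\cale\\ (\calv,A)\text{ connected}}}(-1)^{|A|},
\]
so that the theorem reduces to proving this alternating sum is zero.

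The heart of the argument is an exact description of which edge-subsets yield connected spanning sub-hypergraphs. First I would record the monotonicity of connectivity: if $(\calv,A)$ is connected and $A\subseteq A'\subseteq\cale$, then $(\calv,A')$ is connected, since adjoining edges only merges components. Next, I claim that every connected spanning sub-hypergraph must contain all bridges. Indeed, if a bridge $e$ is omitted, then $A\subseteq\cale\setminus\{e\}$, which is the edge set of the disconnected hypergraph $\calh-e$; as $(\calv,\cale\setminus\{e\})$ is already disconnected, no subset $(\calv,A)$ of it can be connected. Conversely, the hypothesis that $(\calv,B(\calh))$ is connected, together with monotonicity, shows that every $A$ with $B(\calh)\subseteq A\subseteq\cale$ is connected. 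Hence the connected spanning sub-hypergraphs are exactly the sets $A$ lying in the Boolean interval $B(\calh)\subseteq A\subseteq\cale$.

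With this characterization, the computation finishes at once. Factoring out the fixed bridges,
\[
a_1=\sum_{B(\calh)\subseteq A\subseteq\cale}(-1)^{|A|}=(-1)^{|B(\calh)|}\sum_{A'\subseteq\cale\setminus B(\calh)}(-1)^{|A'|},
\]
and the inner sum equals $(1-1)^{|\cale\setminus B(\calh)|}=0$ because $\cale\setminus B(\calh)$ is nonempty, using the hypothesis $B(\calh)\subsetneq\cale$. Therefore $a_1=0$ and $\lambda^2\mid P(\calh,\lambda)$. I expect the main obstacle to be establishing the bridge-containment characterization cleanly: verifying that connectivity of a spanning sub-hypergraph forces every bridge to be present, and fixing the correct notion of ``component'' of a spanning sub-hypergraph (with isolated vertices counted) so that it matches the quantity $N(1,j)$ of (A.8). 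Once that dictionary is pinned down, the remaining binomial cancellation is entirely routine.
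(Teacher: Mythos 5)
Your proposal is correct and follows essentially the same route as the paper: both reduce the claim to showing the linear coefficient $a_1=\sum_j(-1)^jN_j$ vanishes via Tomescu's expansion (A.8), both characterize the connected spanning sub-hypergraphs as exactly those edge sets containing $B(\calh)$ (using the hypothesis that $(\calv,B(\calh))$ is connected for one direction and bridge-removal disconnection for the other), and both finish with the binomial identity $\sum_{A'\subseteq\cale\setminus B(\calh)}(-1)^{|A'|}=(1-1)^{|\cale\setminus B(\calh)|}=0$, valid since $B(\calh)\subsetneq\cale$.
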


It is also well known that for a connected graph $G$,
$G$ is separable 
(i.e., $G$ has a cut-vertex) if and only if 
$(\lambda-1)^2$ is a factor of $P(G,\lambda)$
(see \cite{whi1984,woo1977}).
But this property does not hold for chromatic polynomials
of hypergraphs (see Theorem~\ref{main4-0}).

We first need to make it clear what are  
separable hypergraphs. 
A connected hypergraph $\calh=(\calv,\cale)$ is said to be 
{\it separable} at a vertex $w$ 
if the hypergraph $\calh-w$ obtained from $\calh$ 
by removing $w$ and all edges containing $w$
is disconnected. 
This definition is a natural extension of the one for 
separable graphs. 
Observe that $\calh=(\calv,\cale)$ 
is separable at $w$
if and only if $\calv$ has two subsets 
$\calv_1$ and $\calv_2$
such that $\calv_1\cup \calv_2=\calv$, 
$\calv_1\cap \calv_2=\{w\}$
and for each $e\in \cale$, 
either $w\in e$ or $e\subseteq \calv_i$ 
for some $i\in \{1,2\}$.
Note that if $\calh$ is a graph, 
then $w\in e$ implies that $e\subseteq \calv_i$ for some $i$.
But if $\calh$ is not a graph, 
it is possible that $|e\cap \calv_i|\ge 2$ 
for both $i\in \{1,2\}$.

The hypergraph in Figure~\ref{f1} (b)
is connected and separable at $w$, but 
its chromatic polynomial is $\lambda(\lambda-1)
(\lambda^2+\lambda-1)$
which does not contain a factor $(\lambda-1)^2$.
Actually this hypergraph 
is contained in a family of connected and 
separable hypergraphs 
whose chromatic polynomials have no factor $(\lambda-1)^2$.
For any $\calh=(\calv,\cale)$,
let $F(\calh)$ be the set of vertices $w\in \calv$ 
such that $w\in e$ for every $e\in \cale$.
Observe that $w\in F(\calh)$ if and only if 
$e\not\subseteq \calv-\{w\}$ for every $e\in \cale$.
If $\calh$ does not have parallel edges (i.e., 
edges $e_1,e_2$ with $e_1=e_2$), 
then $F(\calh)=\calv$ if and only if 
$\calh$ is connected and $|\cale|=1$. 
It is trivial that if $|\cale|=1$, then 
$P(\calh,\lambda)$ does not have a factor $(\lambda-1)^2$.
We will show that if $|\cale|\ge 2$
and $F(\calh)\ne \emptyset$,
then $P(\calh,\lambda)$ has a factor $(\lambda-1)^2$
if and only if $|F(\calh)|=1$.

For a hypergraph $\calh=(\calv,\cale)$ and 
any $\calv_0\subseteq \calv$, 
let $\calh\cdot \calv_0$ denote the hypergraph 
obtained from $\calh$ by identifying all vertices in $\calv_0$
as one vertex 
and
let $\calh[\calv_0]$ be the hypergraph 
with vertex set $\calv_0$ and edge set 
$\{e\in \cale: e\subseteq \calv_0\}$.
We call $\calh[\calv_0]$ the sub-hypergraph of $\calh$ 
induced by $\calv_0$.
Let $\calh-\calv_0$ be the induced sub-hypergraph 
$\calh[\calv-\calv_0]$.
A hypergraph is said to be empty if it contains no edges. 
Let $\cali(\calh)$ be the set of those subsets $\calv_0$ 
of $\calv$ such that $\calh[\calv_0]$ is an empty graph.
A hypergraph $\calh$ is said to be {\it Sperner}
if $e_1\not\subseteq e_2$ for each pair of edges $e_1,e_2$ 
in $\calh$.

For the case that $F(\calh)=\emptyset$ and 
$\calh$ is separable,
we also give an equivalent statement 
for $P(\calh,\lambda)$ to have a factor $(\lambda-1)^2$.

\begin{T}\relabel{main4-0}
Let $\calh=(\calv,\cale)$ be any connected and Sperner 
hypergraph with $|\cale|\ge 2$.
\begin{enumerate}
\item[(i)] If $F(\calh)\ne \emptyset$,
then $P(\calh,\lambda)$ has a factor $(\lambda-1)^2$
if and only if $|F(\calh)|=1$;
\item[(ii)] 
If $F(\calh)=\emptyset$
and $\calh$ has 
a vertex $w$ and two proper subsets 
$\calv_1$ and $\calv_2$ of $\calv$ 
such that $\calv_1\cup \calv_2=\calv$, 
$\calv_1\cap \calv_2=\{w\}$
and for each $e\in \cale$, 
either $w\in e$ or $e\subseteq \calv_i$ for some $i$,
then $P(\calh,\lambda)$ has a factor $(\lambda-1)^2$
if and only if one of the following conditions is satisfied:
\begin{enumerate}
\item $\calv_i\notin \cali(\calh)$ for both $i=1,2$;
\item for some $i\in \{1,2\}$,  
$\calv_i\in \cali(\calh)$,
$\calv_{3-i}\notin \cali(\calh)$
and $P(\calh\cdot \calv_i,\lambda)$ has 
a factor $(\lambda-1)^2$.
\end{enumerate}
\end{enumerate}
\end{T}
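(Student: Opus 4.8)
The plan is to prove both parts by writing $P(\calh,\lambda)$ as an explicit colour-count, then reading off the order of vanishing at $\lambda=1$; I will write $\nu(f)$ for the order of vanishing of a polynomial $f$ at $\lambda=1$, so that ``$(\lambda-1)^2\mid f$'' means $\nu(f)\ge 2$. For part (i), set $W=F(\calh)$ and $U=\calv\setminus W$, so every edge contains $W$. I would split the weak proper colourings according to whether $W$ is monochromatic. If $W$ receives at least two colours then every edge is automatically non-monochromatic, so all such colourings are proper, and there are $\lambda^{|\calv|}-\lambda^{|\calv|-|W|+1}$ of them. If $W$ is coloured by a single colour $c$, then an edge $e$ is proper iff not all of $e\setminus W$ is coloured $c$; letting $A$ be the set of $c$-coloured vertices of $U$, this is exactly the requirement that $A$ contain no set $e\setminus W$. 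This yields
\begin{equation*}
P(\calh,\lambda)=\Big(\lambda^{|\calv|}-\lambda^{|\calv|-|W|+1}\Big)+\lambda\sum_{A}(\lambda-1)^{|U|-|A|},
\end{equation*}
the sum running over all $A\subseteq U$ containing no $e\setminus W$ (the Sperner hypothesis guarantees that no edge equals $W$, so each $e\setminus W$ is non-empty). The lowest power of $(\lambda-1)$ in the sum is $|U|-\alpha^*$, where $\alpha^*$ is the largest admissible $|A|$; its complement is a minimum transversal of $\{e\setminus W\}$, of size $\tau^*=|U|-\alpha^*\ge 2$, because a single vertex meeting every $e\setminus W$ would lie in every edge and hence in $F(\calh)=W$. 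Thus the sum vanishes to order $\ge 2$, while the bracketed term vanishes to order exactly $1$ if $|W|\ge 2$ and vanishes identically if $|W|=1$; hence $(\lambda-1)^2\mid P(\calh,\lambda)$ exactly when $|F(\calh)|=1$.

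For part (ii), fix the colour of the cut vertex $w$: by colour-symmetry $P(\calh,\lambda)=\lambda N(\lambda)$, where $N(\lambda)$ counts proper colourings with $\phi(w)=c$ fixed, and $\nu(P(\calh,\lambda))=\nu(N)$. Put $U_i=\calv_i\setminus\{w\}$ and split the edges into those internal to $\calv_1$, those internal to $\calv_2$, and the genuinely straddling edges $e=\{w\}\cup a_e\cup b_e$ with $a_e\subseteq U_1$ and $b_e\subseteq U_2$ both non-empty (call this set $\cale_0^*$). With $\phi(w)=c$, a straddling edge is proper iff not all of $a_e\cup b_e$ is coloured $c$; expanding $\prod_{e\in\cale_0^*}\big(1-[\,a_e\text{ all }c\,]\,[\,b_e\text{ all }c\,]\big)$ and summing over the two sides independently gives the factorisation
\begin{equation*}
N(\lambda)=\sum_{T\subseteq \cale_0^*}(-1)^{|T|}\,\Psi_1(T)\,\Psi_2(T),
\end{equation*}
where $\Psi_i(T)$ counts colourings of $U_i$ that satisfy every edge internal to $\calv_i$ and colour each vertex of $\bigcup_{e\in T}(e\cap \calv_i)$ with $c$. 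The engine of the proof is the observation that $\Psi_i(T)\big|_{\lambda=1}=0$ for every $T$ precisely when $\calv_i$ has an internal edge, i.e.\ when $\calv_i\notin\cali(\calh)$, since at $\lambda=1$ the only colouring available is constant and violates any internal edge.

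Two consequences then finish part (ii). First, since $F(\calh)=\emptyset$, the two sides cannot both lie in $\cali(\calh)$: if neither had an internal edge then every edge would pass through $w$, forcing $w\in F(\calh)$. In case (a), where $\calv_1,\calv_2\notin\cali(\calh)$, each factor $\Psi_i(T)$ vanishes at $\lambda=1$, so every product $\Psi_1(T)\Psi_2(T)$ vanishes to order $\ge 2$, whence $\nu(N)\ge 2$ and $(\lambda-1)^2\mid P(\calh,\lambda)$. In the remaining situation exactly one side, say $\calv_1$, lies in $\cali(\calh)$; then $\Psi_1(T)=\lambda^{|U_1|-|A_1(T)|}$ with $A_1(T)=\bigcup_{e\in T}a_e$, while the same expansion applied to the contraction gives $P(\calh\cdot\calv_1,\lambda)=\lambda\sum_T(-1)^{|T|}\Psi_2(T)$. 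Differentiating both expressions at $\lambda=1$ and using $\Psi_2(T)\big|_{\lambda=1}=0$ (valid because $\calv_2\notin\cali(\calh)$) to annihilate the contribution of the exponents $|U_1|-|A_1(T)|$, I obtain $N'(1)=\big(P(\calh\cdot\calv_1,\lambda)/\lambda\big)'(1)$; hence $(\lambda-1)^2\mid P(\calh,\lambda)$ iff $(\lambda-1)^2\mid P(\calh\cdot\calv_1,\lambda)$, which is exactly condition (b).

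The main obstacle is the bookkeeping of the edges through $w$: these may have vertices on both sides, so the separation does not split the edge set cleanly and no naive product formula for $P(\calh,\lambda)$ is available. Isolating the genuinely straddling edges and treating them by inclusion–exclusion—so that the two sides decouple into the factors $\Psi_1(T)$ and $\Psi_2(T)$, whose vanishing at $\lambda=1$ is governed exactly by membership in $\cali(\calh)$—is the step that makes conditions (a) and (b) fall out, and the delicate point in case (b) is checking that the extra monomials $\lambda^{|U_1|-|A_1(T)|}$ do not affect the first derivative at $\lambda=1$.
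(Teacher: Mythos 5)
Your proof is correct, but it takes a genuinely different route from the paper's, most visibly in part (ii). For part (i), your split of the proper colourings according to whether $F(\calh)$ is monochromatic is the paper's addition--contraction identity $P(\calh,\lambda)=P(\calh+F(\calh),\lambda)+P(\calh\cdot F(\calh),\lambda)$ in combinatorial disguise; the difference is how you finish the monochromatic part, namely by the transversal observation that a minimum transversal of $\{e\setminus F(\calh):e\in\cale\}$ has size at least $2$ (a singleton transversal would lie in every edge and hence in $F(\calh)$), whereas the paper recurses to the case $|F(\calh)|=1$ and invokes Proposition~\ref{pro3}. For part (ii) the paper works entirely through Propositions~\ref{pro3} and~\ref{main4}: it converts divisibility of $P(\calh,\lambda)$ by $(\lambda-1)^2$ into divisibility by $\lambda^2$ of the sum $\sum_{\calv_0}P(\calh-\calv_0,\lambda)$ over independent sets containing $\calv_1$ or $\calv_2$, and then re-indexes that sum as the corresponding sum for $\calh\cdot\calv_1$, applying Proposition~\ref{pro3} once more in reverse. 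You instead fix the colour of $w$, isolate the genuinely straddling edges, and use inclusion--exclusion to obtain $N(\lambda)=\sum_T(-1)^{|T|}\Psi_1(T)\Psi_2(T)$, then read off orders of vanishing at $\lambda=1$; the two key facts --- that $\Psi_i(T)(1)=0$ exactly when $\calv_i\notin\cali(\calh)$, and that $N'(1)=\bigl(P(\calh\cdot\calv_1,\lambda)/\lambda\bigr)'(1)$ when $\calv_1\in\cali(\calh)$, because the factors $\lambda^{|U_1|-|A_1(T)|}$ contribute to the derivative only through terms multiplied by $\Psi_2(T)(1)=0$ --- are correctly established, and your observation that $F(\calh)=\emptyset$ forbids $\calv_1,\calv_2\in\cali(\calh)$ simultaneously matches the paper's. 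Your approach is more self-contained (it never needs Propositions~\ref{pro3}, \ref{pro3-co2} or \ref{main4}) at the price of heavier bookkeeping. The one point you use without comment is that each $\Psi_i(T)$ is a polynomial in $\lambda$, which is needed before one can speak of its value and derivative at $\lambda=1$; this is routine (for instance, $\Psi_i(T)=P(\calh[\calv_i]\cdot(\{w\}\cup A_i(T)),\lambda)/\lambda$, or one can repeat the independent-set expansion from your part (i)), but it should be stated.
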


We will prove Theorems~\ref{main1}-\ref{main4-0}
 in Sections 3-5
after some fundamental results are introduced 
in Section 2.
Finally, in Section 6, 
we will propose some open problems regarding 
multiplicities of 
roots ``$0$" and ``$1$" of $P(\calh,\lambda)$ for a hypergraph $\calh$ and some problems on the locations of real roots 
of chromatic polynomials of Zykov-planar hypergraphs,
where the definition of a Zykov-hypergraph was originally 
given by Zykov \cite{zyk2}, as stated in Definition~\ref{d1}.

\section{Preliminaries}

In this section, we present several known 
results on chromatic polynomials of hypergraphs, 
which will be applied later.
The first one follows directly from the definition of 
weak proper colourings of a hypergraph.

\begin{prop}\relabel{pro1}
Let $e_1,e_2$ be any two edges in a hypergraph $\calh$. 
If $e_1\subseteq e_2$, then 
$$
P(\calh,\lambda)=
P(\calh-e_2,\lambda),
$$
where $\calh-e_2$ is the hypergraph obtained from $\calh$ 
by removing $e_2$.
\end{prop}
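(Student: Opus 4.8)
The plan is to prove this directly from the definition of a weak proper $\lambda$-colouring, with no induction or generating-function machinery needed. The key observation is that the colouring constraint imposed by an edge $e$ is precisely that $e$ is not monochromatic, and that a larger edge is easier to satisfy than a smaller one it contains. Concretely, a mapping $\phi:\calv\to\{1,\ldots,\lambda\}$ is a weak proper colouring of $\calh$ if and only if $|\{\phi(v):v\in e\}|>1$ for every $e\in\cale$; I will compare the constraint from $e_1$ with the constraint from $e_2$ under the hypothesis $e_1\subseteq e_2$.

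First I would fix an arbitrary positive integer $\lambda$ and argue that $\calh$ and $\calh-e_2$ have exactly the same set of weak proper $\lambda$-colourings; since both chromatic polynomials agree on all positive integers, and each is a polynomial in $\lambda$ (as noted in the introduction), they are equal as polynomials. The inclusion ``colourings of $\calh$ $\subseteq$ colourings of $\calh-e_2$'' is immediate: $\calh-e_2$ has fewer edges, hence fewer constraints, so any $\phi$ satisfying all edges of $\calh$ in particular satisfies all edges of $\calh-e_2$. The content is in the reverse inclusion.

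For the reverse direction, suppose $\phi$ is a weak proper $\lambda$-colouring of $\calh-e_2$; I must check that $\phi$ also satisfies the dropped edge $e_2$, i.e.\ that $|\{\phi(v):v\in e_2\}|>1$. Here I use $e_1\subseteq e_2$ together with the fact that $e_1$ is still an edge of $\calh-e_2$ (we only removed $e_2$, and $e_1\ne e_2$ is implicit, or if $e_1=e_2$ the statement is trivial). Since $\phi$ is proper on $\calh-e_2$, the edge $e_1$ is non-monochromatic, so there exist $u,v\in e_1$ with $\phi(u)\ne\phi(v)$. But $e_1\subseteq e_2$ means $u,v\in e_2$ as well, whence $|\{\phi(v):v\in e_2\}|\ge 2>1$, so $e_2$'s constraint is automatically met. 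Thus $\phi$ is a weak proper colouring of $\calh$, establishing the reverse inclusion and completing the bijection.

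I expect no genuine obstacle here; the only point requiring care is the bookkeeping of which edges survive in $\calh-e_2$ (namely that $e_1$ does, so its smaller, strictly stronger constraint is retained) and the degenerate case $e_1=e_2$, which should be dismissed at the outset since then $\calh-e_2$ simply discards a duplicate-implied edge and the claim is a triviality. The essential content is simply that a monochromatic-avoidance constraint on a set is implied by the same constraint on any subset.
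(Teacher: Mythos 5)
Your argument is correct and is exactly the one the paper intends: the paper gives no written proof, stating only that the proposition ``follows directly from the definition of weak proper colourings,'' and your observation that the non-monochromaticity constraint on $e_1\subseteq e_2$ implies the one on $e_2$ (so the two hypergraphs have identical sets of weak proper $\lambda$-colourings for every positive integer $\lambda$, hence equal polynomials) is precisely that direct argument, spelled out.
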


By Proposition~\ref{pro1}, we need only to consider 
Sperner hypergraphs in the function $P(\calh,\lambda)$.

For a hypergraph $\calh=(\calv,\cale)$, a {\it component} of $\calh$ 
is an induced and connected sub-hypergraph $\calh[\calv_0]$
such that $\calh[\calv_0\cup \{v\}]$ is disconnected 
for any $v\in \calv-\calv_0$.
By the definition of $P(\calh,\lambda)$,
$P(\calh,\lambda)$ has the following factorization 
when $\calh$ is disconnected.

\begin{prop}\relabel{pro2}
Assume that $\calh_1,\cdots,\calh_k$ 
are components of $\calh$. Then 
$$
P(\calh,\lambda)
=\prod_{1\le i\le k}P(\calh_i,\lambda).
$$
\end{prop}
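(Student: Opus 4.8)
The plan is to prove the factorization directly from the definition of a weak proper $\lambda$-colouring, exploiting the fact that each edge of $\calh$ lies entirely within a single component. First I would record the structural fact that the vertex sets $\calv_1,\ldots,\calv_k$ of the components $\calh_1,\ldots,\calh_k$ form a partition of $\calv$. This follows because the relation ``connected by a sequence of overlapping edges'' on $\calv$ is an equivalence relation: reflexivity and symmetry are immediate, and transitivity holds by concatenating two edge-sequences. Its equivalence classes are then precisely the vertex sets of the components, so these sets are pairwise disjoint with union $\calv$.

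The key step is to verify that every edge lies in one component. For any edge $e\in\cale$ and any two vertices $u,v\in e$, the single-edge sequence $e_0=e$ witnesses that $u$ and $v$ are connected, so they belong to the same class $\calv_i$; hence $e\subseteq\calv_i$ for exactly one $i$, and $e$ is an edge of $\calh_i=\calh[\calv_i]$. Consequently the edge set $\cale$ is the disjoint union of the edge sets of the components $\calh_1,\ldots,\calh_k$.

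Next I would set up the counting bijection. Since $\calv_1,\ldots,\calv_k$ partition $\calv$, every mapping $\phi:\calv\to\{1,\ldots,\lambda\}$ corresponds bijectively to the tuple $(\phi_1,\ldots,\phi_k)$ of its restrictions $\phi_i=\phi|_{\calv_i}$, and conversely any such tuple glues to a unique $\phi$. I then claim that $\phi$ is a weak proper $\lambda$-colouring of $\calh$ if and only if each $\phi_i$ is a weak proper $\lambda$-colouring of $\calh_i$. Indeed, the defining condition $|\{\phi(v):v\in e\}|>1$ ranges over all $e\in\cale$; because each such $e$ is contained in a single $\calv_i$, the constraint attached to $e$ depends only on $\phi_i$, and as $e$ ranges over $\cale$ the full list of constraints splits exactly into the per-component lists.

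Finally, the factorization is immediate: the weak proper $\lambda$-colourings of $\calh$ are in bijection with the $k$-tuples obtained by choosing, independently for each component, a weak proper $\lambda$-colouring of $\calh_i$, so the total count is the product of the individual counts. This gives $P(\calh,\lambda)=\prod_{1\le i\le k}P(\calh_i,\lambda)$ for every positive integer $\lambda$, and hence as polynomials in $\lambda$ by the standard argument that two polynomials agreeing at infinitely many values coincide. I do not expect any genuine obstacle here; the only point requiring a moment's care is the containment of each edge within a single component, since this is exactly what makes the per-edge constraints decouple across the partition.
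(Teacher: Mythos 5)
Your proof is correct and takes essentially the same route as the paper, which states Proposition~2 without proof as following ``by the definition of $P(\calh,\lambda)$'': your argument simply fills in the routine details (edges lie within single components, so colourings decouple and multiply). Nothing further is needed.
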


For any hypergraph $\calh=(\calv,\cale)$
and $\calv_0\subset \calv$, 
recall that $\calh\cdot \calv_0$ is obtained from $\calh$ 
by identifying all vertices in $\calv_0$ as one, i.e., 
$\calh\cdot \calv_0$ is the hypergraph 
with vertex set $(\calv-\calv_0)\cup \{w\}$
and edge set 
$$\{e'\in \cale: e'\cap \calv_0=\emptyset\}
\cup \{(e'-\calv_0)\cup \{w\}: 
e'\cap \calv_0\ne \emptyset\},
$$ 
where $w\notin \calv$.
For an edge $e$ in $\calh$,
let $\calh/e$ be the hypergraph $(\calh-e)\cdot e$.
This hypergraph $\calh/e$ is said to be 
obtained from $\calh$ by {\it contracting} $e$.

The deletion-contraction formula 
for chromatic polynomials of graphs
is very important for the computation of this polynomial
\cite{big1993, birk1946, D, rea1, rea2, whi1932b}. 
It was extended to chromatic polynomials of hypergraphs
by Jones~\cite{Jones 1976}.

\begin{T}[\cite{Jones 1976}] \relabel{t1}
Let $\calh=(\calv,\cale)$ be 
a hypergraph.  For any $e\in \cale$, 
\begin{equation}\relabel{e1}
P(\calh,\lambda)=P(\calh-e,\lambda)-P(\calh/e,\lambda).
\end{equation}
\end{T}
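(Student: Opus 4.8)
The plan is to prove the identity for each fixed positive integer $\lambda$ by a direct counting argument on weak proper $\lambda$-colourings, and then invoke the fact (recorded in the introduction) that $P(\calh,\lambda)$ is a polynomial in $\lambda$, so that an identity holding at every positive integer holds as an identity of polynomials. First I would fix a positive integer $\lambda$ and an edge $e\in\cale$, and observe that $\calh$ and $\calh-e$ share the vertex set $\calv$: a map $\phi:\calv\to\{1,\dots,\lambda\}$ is a weak proper $\lambda$-colouring of $\calh-e$ exactly when $|\{\phi(v):v\in e'\}|>1$ for every $e'\in\cale-\{e\}$. I would then split the weak proper $\lambda$-colourings of $\calh-e$ according to their behaviour on $e$. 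Those with $|\{\phi(v):v\in e\}|>1$ are precisely the colourings that in addition satisfy the constraint imposed by $e$, hence are exactly the weak proper $\lambda$-colourings of $\calh$, numbering $P(\calh,\lambda)$. Writing $N$ for the number of the remaining colourings, i.e. those monochromatic on $e$, this already yields $P(\calh-e,\lambda)=P(\calh,\lambda)+N$, so it remains only to identify $N$ with $P(\calh/e,\lambda)$.

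The key step is a colour-preserving bijection between the weak proper $\lambda$-colourings of $\calh/e$ and the weak proper $\lambda$-colourings of $\calh-e$ that are monochromatic on $e$. Recall $\calh/e=(\calh-e)\cdot e$ has vertex set $(\calv-e)\cup\{w\}$. To a colouring $\psi$ of $\calh/e$ I associate the colouring $\phi$ of $\calv$ defined by $\phi(v)=\psi(v)$ for $v\in\calv-e$ and $\phi(v)=\psi(w)$ for $v\in e$; conversely, from a $\phi$ that is monochromatic on $e$ I recover $\psi$ by letting $\psi(w)$ be the common colour on $e$. This is visibly a bijection of the underlying maps, so the content is matching the edge constraints. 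For $e'\in\cale-\{e\}$ with $e'\cap e=\emptyset$, the edge $e'$ is unchanged in $\calh/e$ and $\{\phi(v):v\in e'\}=\{\psi(v):v\in e'\}$, so its constraint holds for $\phi$ iff it holds for $\psi$. For $e'$ with $e'\cap e\neq\emptyset$, its image in $\calh/e$ is $(e'-e)\cup\{w\}$, and since every vertex of $e'\cap e$ receives the colour $\psi(w)$ under $\phi$, one has $\{\phi(v):v\in e'\}=\{\psi(v):v\in(e'-e)\cup\{w\}\}$, so again the two constraints coincide. Hence $\phi$ meets every constraint of $\calh-e$ iff $\psi$ meets every constraint of $\calh/e$, giving $N=P(\calh/e,\lambda)$ and therefore $P(\calh,\lambda)=P(\calh-e,\lambda)-P(\calh/e,\lambda)$.

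The delicate point, and where I would be most careful, is the degenerate behaviour of contraction, since the theorem assumes neither that $\calh$ is Sperner nor that it is simple. If some $e'\in\cale-\{e\}$ satisfies $e'\subseteq e$, then its image $(e'-e)\cup\{w\}=\{w\}$ is a singleton edge of $\calh/e$, forcing $P(\calh/e,\lambda)=0$; correspondingly, any $\phi$ monochromatic on $e$ is monochromatic on $e'$, so no such $\phi$ is weak proper for $\calh-e$ and $N=0$, keeping the two sides equal. Likewise, distinct edges of $\calh-e$ may become parallel after identification, but parallel edges impose identical colour constraints and so change neither count; thus the correspondence remains a bijection compatible with all constraints. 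With these cases absorbed, $P(\calh-e,\lambda)=P(\calh,\lambda)+P(\calh/e,\lambda)$ holds for every positive integer $\lambda$, and as all three are polynomials in $\lambda$, the identity \eqref{e1} follows.
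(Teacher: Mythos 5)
Your proof is correct, but note that the paper itself offers no proof to compare against: Theorem~5 is quoted from Jones with only a citation, so your argument stands on its own. What you give is the standard bijective deletion--contraction argument, transplanted from the graph case: split the weak proper $\lambda$-colourings of $\calh-e$ according to whether $e$ is rainbow-enough ($|\{\phi(v):v\in e\}|>1$) or monochromatic, identify the first class with colourings of $\calh$, and match the second class with colourings of $\calh/e$ via the colour-preserving bijection $\phi\leftrightarrow\psi$, finishing by polynomial interpolation from positive integers. The constraint-matching step is right, since for $e'\cap e\neq\emptyset$ you correctly get $\{\phi(v):v\in e'\}=\{\psi(v):v\in(e'-e)\cup\{w\}\}$, and your treatment of the degenerate cases is the part that genuinely needed care and is handled properly: if some $e'\subseteq e$ survives in $\cale-\{e\}$, then the contracted edge $\{w\}$ kills all colourings of $\calh/e$ exactly when monochromatic-on-$e$ colourings of $\calh-e$ are killed by $e'$, so both counts are $0$; and merging of edges into parallel (or identical) edges after identification changes no colour constraint, hence no count --- this matters because the paper's definition of $\calh\cdot\calv_0$ takes the edge set as a set, so coinciding images collapse, which is harmless for exactly the reason you state. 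One could even observe that your singleton-edge case is already subsumed by the general bijection (both constraints fail simultaneously when the image edge is $\{w\}$), but flagging it explicitly does no harm. The argument is complete, including the case $|e|=1$, where both sides vanish or reduce correctly.
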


Note that Theorem~\ref{t1} can be equivalently stated below:
for any subset $e$ of $\calv$,  
\begin{equation}\relabel{e1-0}
P(\calh,\lambda)=
P(\calh+e,\lambda)+P(\calh\cdot e,\lambda),
\end{equation}
where $\calh+e$ is the hypergraph obtained from $H$ 
by adding a new edge $e$.

A hypergraph $\calh=(\calv,\cale)$ is written as 
$\calh_{1}\cup\calh_{2}$,
where $\calh_i=(\calv_i,\cale_i)$ is a hypergraph 
for $i=1,2$,
if $\calv=\calv_1\cup \calv_2$,  
$\cale=\cale_1\cup \cale_2$
and for any $e\subseteq \calv_1\cap \calv_2$,
$e\in \cale_1$ if and only if $e\in \cale_2$.
If $\{u,v\}\in \cale_1\cap \cale_2$ 
for each pair $\{u,v\}\subseteq \calv_1\cap \calv_2$,
then write $\calh_1\cap \calh_2=K_p$, 
where $p=|\cale_1\cap \cale_2|$.
Borowiecki and \L{}azuka~\cite{Borowiecki 2000}
extended Zykov's result \cite{zyk} 
on the factorization of $P(G_1\cup G_2,\lambda)$ 
when $G_1\cap G_2\cong K_p$ for two graphs $G_1$ and $G_2$.

\begin{T}[\cite{Borowiecki 2000}] \relabel{t2} 
If 
$\calh=\calh_{1}\cup\calh_{2}$ and $\calh_{1}\cap \calh_{2}=K_{p}$, then 
\begin{equation}
P(\calh,\lambda)=\frac{P(\calh_{1},\lambda)P(\calh_{2},\lambda)}{P(K_{p},\lambda)}. \label{e2}
\end{equation}
\end{T}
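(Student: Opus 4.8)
The plan is to prove \eqref{e2} by directly enumerating the weak proper $\lambda$-colourings of $\calh$ and exploiting the interchangeability of colours. I would first record the basic structural fact that, since $\cale=\cale_1\cup\cale_2$ with every edge of $\cale_i$ contained in $\calv_i$, no edge of $\calh$ meets both $\calv_1-\calv_2$ and $\calv_2-\calv_1$. Consequently a map $\phi:\calv\to\{1,\dots,\lambda\}$ is a weak proper colouring of $\calh$ if and only if its restriction $\phi|_{\calv_1}$ is a weak proper colouring of $\calh_1$ and $\phi|_{\calv_2}$ is a weak proper colouring of $\calh_2$; the two requirements interact only through the shared vertices $\calv_0:=\calv_1\cap\calv_2$.

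Next I would condition on the colouring $\psi$ induced on $\calv_0$. Because $\calh_1\cap\calh_2=K_p$ means every pair of vertices of $\calv_0$ forms a $2$-edge lying in both $\cale_1$ and $\cale_2$, any weak proper colouring must assign $p$ pairwise distinct colours to the $p$ vertices of $\calv_0$ (a larger edge contained in $\calv_0$, which by the definition of $\calh_1\cup\calh_2$ lies in both $\cale_1$ and $\cale_2$, is then automatically satisfied). Hence only the $P(K_p,\lambda)=\lambda(\lambda-1)\cdots(\lambda-p+1)$ injective colourings $\psi$ of $\calv_0$ admit an extension. For a fixed such $\psi$, let $N_i(\psi)$ be the number of weak proper colourings of $\calh_i$ whose restriction to $\calv_0$ is $\psi$. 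Since $\calh_1$ and $\calh_2$ share only $\calv_0$, the choices on $\calv_1-\calv_0$ and on $\calv_2-\calv_0$ are independent once $\psi$ is fixed, so the number of weak proper colourings of $\calh$ inducing $\psi$ is exactly $N_1(\psi)N_2(\psi)$, and therefore $P(\calh,\lambda)=\sum_{\psi}N_1(\psi)N_2(\psi)$, the sum running over injective $\psi$.

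The key step, and the place where I expect the real work, is to show that $N_i(\psi)$ does not depend on which injective $\psi$ is chosen, but only on its being injective. For this I would invoke colour symmetry: for any permutation $\sigma$ of $\{1,\dots,\lambda\}$, the map $\phi\mapsto\sigma\circ\phi$ is a bijection from the weak proper colourings of $\calh_i$ extending $\psi$ onto those extending $\sigma\circ\psi$, since weak properness depends only on equalities and inequalities among colours and so is preserved by relabelling. As any two injective colourings of $\calv_0$ differ by such a permutation, all values $N_i(\psi)$ coincide; denote them $N_1$ and $N_2$. Summing over the $P(K_p,\lambda)$ injective colourings gives $P(\calh_i,\lambda)=P(K_p,\lambda)\,N_i$ for $i=1,2$, whence $N_i=P(\calh_i,\lambda)/P(K_p,\lambda)$ and $P(\calh,\lambda)=P(K_p,\lambda)\,N_1N_2=P(\calh_1,\lambda)P(\calh_2,\lambda)/P(K_p,\lambda)$.

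Finally I would pass from a numerical identity to the polynomial identity \eqref{e2}. The computation above is valid for every integer $\lambda\ge p$, which already forces equality of the two polynomials at infinitely many points; alternatively one proves the cleared form $P(\calh,\lambda)P(K_p,\lambda)=P(\calh_1,\lambda)P(\calh_2,\lambda)$ for all positive integers $\lambda$ (both sides vanish when $\lambda<p$, since $\calh$, $\calh_1$ and $\calh_2$ all contain $K_p$) and then divides by the polynomial $P(K_p,\lambda)$, which is not identically zero. Either route yields \eqref{e2}. The only genuine subtlety is the symmetry argument giving the constancy of the extension counts over injective colourings of the clique; the remainder is bookkeeping about which edges lie on which side of the decomposition.
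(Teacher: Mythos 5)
Your proof is correct; note that the paper does not actually prove Theorem~\ref{t2} at all, but quotes it from Borowiecki and \L{}azuka \cite{Borowiecki 2000} as a preliminary, so there is no internal argument to compare against. What you give is a complete, self-contained proof, and it is essentially the classical counting argument (Zykov's argument for graphs, extended to hypergraphs in \cite{Borowiecki 2000}): split each colouring along $\calv_0=\calv_1\cap\calv_2$, observe that the pairwise $2$-edges on $\calv_0$ force the restriction to be injective, and use colour-permutation symmetry to see that the number $N_i(\psi)$ of extensions to $\calh_i$ is the same for every injective $\psi$, giving $P(\calh_i,\lambda)=P(K_p,\lambda)N_i$ and $P(\calh,\lambda)=P(K_p,\lambda)N_1N_2$ for all integers $\lambda\ge p$. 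Your two delicate points are handled properly: the independence of the extensions on $\calv_1-\calv_0$ and $\calv_2-\calv_0$ rests on the structural fact that every edge of $\cale_1\cup\cale_2$ lies entirely in one $\calv_i$, which you state; and the passage from the numerical identity at infinitely many integers to the polynomial identity \eqref{e2} is legitimate because $P(\calh,\lambda)$ is known to be a polynomial (property (A.8) in Section~1), which you use implicitly and should perhaps cite explicitly. Degenerate situations are also absorbed automatically: a singleton edge forces all the counts $N_i(\psi)$ and all three chromatic polynomials to vanish, an edge of size at least $3$ inside $\calv_0$ is, as you remark, automatically rainbow under an injective $\psi$, and the cases $p\in\{0,1\}$ cause no trouble since every map on at most one vertex is injective. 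The only cosmetic remark is that the paper's definition contains a typo ($p=|\cale_1\cap\cale_2|$ should read $p=|\calv_1\cap\calv_2|$), and you have correctly worked with the intended meaning.
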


\section{Proof of Theorem~\ref{main1}
and Corollary~\ref{mainco1} (b)}

Let $G=(V,E)$ be a simple graph, 
i.e., $G$ has neither parallel edges nor loops. 
For $v\in V$, let $N_G(v)$ (or simply $N(v)$)
be the set $\{u\in V: uv\in E\}$,
and let $N[v]=N(v)\cup \{v\}$.
The degree of $v$ in $G$, denoted by $d(v)$, is
the size $|N(v)|$ of $N(v)$.

\begin{prop}\relabel{lem1}
For any vertex $v\in V$, 
\begin{equation}\relabel{e13}
P(\calhd G,\lambda)=(\lambda-1)\cdot P(\calhd {G-\{v\}},\lambda)+(\lambda-1)^{d(v)}
\cdot P(\calhd {G-N[v]},\lambda).
\end{equation}
\end{prop}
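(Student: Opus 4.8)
The plan is to prove the identity directly, by counting weak proper $\lambda$-colourings of $\calhd{G}$ and splitting them according to how the colour of the apex vertex $w$ compares with the colour of $v$. Recall that $\calhd{G}$ has vertex set $V\cup\{w\}$ and edge set $\{\{u,u',w\}:uu'\in E\}$, so a map $\phi\colon V\cup\{w\}\to\{1,\dots,\lambda\}$ is a weak proper $\lambda$-colouring exactly when no edge $uu'$ of $G$ has $\phi(u)=\phi(u')=\phi(w)$. First I would partition all such colourings into those with $\phi(v)\ne\phi(w)$ and those with $\phi(v)=\phi(w)$, and count each class separately. I would not invoke Theorem~\ref{main1} here, since Proposition~\ref{lem1} is meant to serve as an ingredient in the proof of that theorem.

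For the class $\phi(v)\ne\phi(w)$, every edge $\{v,u,w\}$ incident with $v$ is automatically satisfied, so the surviving constraints are precisely those of $\calhd{G-\{v\}}$, whose apex is again $w$. Thus each weak proper colouring of $\calhd{G-\{v\}}$ (which already pins down $\phi(w)$) extends in exactly $\lambda-1$ ways, by assigning $v$ any colour different from $\phi(w)$. This class therefore contributes $(\lambda-1)\,P(\calhd{G-\{v\}},\lambda)$. For the class $\phi(v)=\phi(w)$, I would fix the common colour $c$. Each edge $\{v,u,w\}$ then forces $\phi(u)\ne c$ for every neighbour $u\in N(v)$, and conversely, once all $d(v)$ neighbours avoid $c$, every edge of $G$ meeting $N(v)$ is automatically satisfied. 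Hence the neighbours may be coloured independently with any of the $\lambda-1$ colours different from $c$, yielding a factor $(\lambda-1)^{d(v)}$, while the induced constraints on $V-N[v]$ together with $w$ are exactly those of $\calhd{G-N[v]}$ with $w$ coloured $c$. Using the symmetry of $P(\cdot,\lambda)$ over the $\lambda$ choices of colour for the apex vertex, the number of weak proper colourings of $\calhd{G-N[v]}$ with $w\mapsto c$ equals $\tfrac1\lambda P(\calhd{G-N[v]},\lambda)$; summing over the $\lambda$ values of $c$ gives $(\lambda-1)^{d(v)}P(\calhd{G-N[v]},\lambda)$. Adding the two contributions yields the identity (\ref{e13}).

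The step I expect to require the most care is the second class. I must verify that all edges of $G$ incident with a neighbour of $v$ — both those lying inside $N(v)$ and those joining $N(v)$ to $V-N[v]$ — become automatically satisfied as soon as the neighbours avoid the common colour $c$, so that the $N(v)$-vertices genuinely contribute an independent factor $(\lambda-1)^{d(v)}$ and impose no further restriction on the vertices of $V-N[v]$. I must also handle correctly the apex vertex of $\calhd{G-N[v]}$, whose colour is pinned to $c$, which is why the colour-symmetry of $P$ (each of the $\lambda$ apex colours giving the same count) is needed to restore the full polynomial $P(\calhd{G-N[v]},\lambda)$.

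As a consistency check, I note that under the substitution in Theorem~\ref{main1} this recurrence is precisely the image of the standard independence-set recurrence $I(G,x)=I(G-\{v\},x)+x\,I(G-N[v],x)$, obtained by conditioning on whether $v$ lies in the independent set; this both explains the shape of the two terms and confirms the factor $(\lambda-1)^{d(v)}$ (coming from $(\lambda-1)^n\cdot x=(\lambda-1)^{n-1}$ with $x=1/(\lambda-1)$, against the $n-1-d(v)$ vertices of $G-N[v]$). I would, however, keep the colouring argument self-contained to avoid circularity with Theorem~\ref{main1}.
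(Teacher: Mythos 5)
Your proof is correct, and the partition you use --- colourings with $\phi(v)\ne\phi(w)$ versus those with $\phi(v)=\phi(w)$ --- is exactly the combinatorial content of the paper's opening move, which applies the addition/identification identity (\ref{e1-0}) to the pair $e=\{w,v\}$ (adding the edge $\{w,v\}$ forces $\phi(v)\ne\phi(w)$; identifying $w$ with $v$ forces equality). Where you genuinely diverge is in how the two resulting terms are evaluated. The paper stays at the structural level: for the first term it deletes the now-dominated triples $\{w,v,v_i\}$ via Proposition~\ref{pro1} and peels off the factor $\lambda-1$ with the clique-cut factorization of Theorem~\ref{t2}; for the contracted term it notes that each triple $\{w,v,v_i\}$ collapses to the $2$-element edge $\{w,v_i\}$, deletes every triple containing such a pair by Proposition~\ref{pro1} again, and applies Theorem~\ref{t2} once per neighbour to extract $(\lambda-1)^{d(v)}$. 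You instead count colourings directly, which makes the argument self-contained (no appeal to Theorems~\ref{t1} or~\ref{t2}) at the cost of having to check by hand the points you correctly single out: that every edge of $G$ meeting $N(v)$ is automatically satisfied once the neighbours avoid $c=\phi(w)$, and that the apex colour can be unfrozen by the colour-symmetry argument giving $\tfrac1\lambda P(\calhd{G-N[v]},\lambda)$ per choice of $c$. Both of these checks go through, including the degenerate cases $d(v)=0$ and $N[v]=V$. The only thing worth adding for completeness is the standard remark that your count establishes (\ref{e13}) for every positive integer $\lambda$, and since both sides are polynomials this suffices for the identity to hold identically.
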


\begin{proof}
Let $w$ be the new vertex in $\calhd G$ when it is produced 
from $G$.
By (\ref{e1-0}), 
\begin{equation}\relabel{e14}
P(\calhd G,\lambda)=P(\calhd G+e,\lambda)
+P(\calhd G\cdot e,\lambda),
\end{equation}
where $e=\{w, v\}$. 

Observe that $e\subset \{w,v,v_i\}$ for all $v_i\in N(v)$.
By Proposition~\ref{pro1}, 
$$
P(\calhd G+e,\lambda)
=P(\calhd G+e-\cale(v),\lambda),
$$
where $\cale(v)=\{\{w,v,v_i\}: v_i\in N(v)\}$.
By Theorem~\ref{t2}, 
\begin{equation}\label{e15}
P(\calhd G+e-\cale(v),\lambda)
=(\lambda-1)\cdot P(\calhd G-\{v\},\lambda)
=(\lambda-1)\cdot P(\calhd {G-\{v\}},\lambda).
\end{equation}
Note that the edges 
$\{w,v,v_{i}\}$ in $\calhd G$, where $v_i\in N(v)$,  
are changed to $\{w, v_{i}\}$ in $\calhd G\cdot e$, 
and thus all edges $\{w, v_{i}, u\}$
in $\calhd G$, where $u\in N(v_i)-\{v\}$, 
can be removed by Proposition~\ref{pro1}. 
By Theorem \ref{t2} again, 
\begin{equation}\relabel{e16}
P(\calhd G\cdot e,\lambda)=(\lambda-1)^{d(v)}
\cdot P(\calhd {G-N[v]},\lambda).
\end{equation}
Hence the result follows from (\ref{e14}), 
(\ref{e15}) and (\ref{e16}).
\end{proof}

The following property on the independence polynomial of a graph
is needed for proving Theorem~\ref{main1}.

\begin{prop}[\cite{M}]\relabel{p3.1}
Let $G=(V,E)$ be any simple graph and $v\in V$. Then 
$I(G,x)=I(G-\{v\},x)+xI(G-N[v],x)$.
\end{prop}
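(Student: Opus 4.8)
The plan is to prove the identity by splitting the sum defining $I(G,x)$ according to whether or not a given independent set contains the chosen vertex $v$. Writing $\mathcal{A}$ for the collection of all independent sets of $G$, I would partition it as $\mathcal{A}=\mathcal{A}_0\cup\mathcal{A}_1$, where $\mathcal{A}_0$ consists of the independent sets avoiding $v$ and $\mathcal{A}_1$ of those containing $v$. Then $I(G,x)=\sum_{A\in\mathcal{A}_0}x^{|A|}+\sum_{A\in\mathcal{A}_1}x^{|A|}$, and I would evaluate the two sums separately and recognise each as one of the two terms on the right-hand side.

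For the first sum, I would observe that a subset $A\subseteq V$ with $v\notin A$ is independent in $G$ if and only if it is independent in the induced subgraph $G-\{v\}$, since deleting $v$ removes no edge joining two vertices of $A$. Hence $\mathcal{A}_0$ is precisely the family of independent sets of $G-\{v\}$, giving $\sum_{A\in\mathcal{A}_0}x^{|A|}=I(G-\{v\},x)$.

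For the second sum, I would set up the bijection $A\mapsto A\setminus\{v\}$ between $\mathcal{A}_1$ and the independent sets of $G-N[v]$. The key point is that any $A\in\mathcal{A}_1$ is independent and contains $v$, so it contains no neighbour of $v$; thus $A\setminus\{v\}$ avoids all of $N[v]$ and is an independent set of $G-N[v]$. Conversely, for any independent set $B$ of $G-N[v]$, the set $B\cup\{v\}$ contains no neighbour of $v$ and so is independent in $G$, providing the inverse map. Since $|A|=|A\setminus\{v\}|+1$, this bijection yields $\sum_{A\in\mathcal{A}_1}x^{|A|}=\sum_{B}x^{|B|+1}=x\,I(G-N[v],x)$. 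Adding the two contributions gives the claimed recurrence.

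The argument is entirely combinatorial and presents no serious obstacle; the only point requiring a moment's care is verifying that deleting $v$ from a member of $\mathcal{A}_1$ lands in $G-N[v]$ rather than merely in $G-\{v\}$. This is exactly where the independence hypothesis is used, since it forces every neighbour of $v$ to be absent from any independent set containing $v$, and it is the feature that makes the coefficient of the second term $I(G-N[v],x)$ rather than $I(G-\{v\},x)$.
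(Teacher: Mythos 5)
Your argument is correct: partitioning the independent sets of $G$ according to whether they contain $v$, identifying the first class with the independent sets of $G-\{v\}$, and putting the second class in bijection with the independent sets of $G-N[v]$ via $A\mapsto A\setminus\{v\}$ is exactly the standard proof of this recurrence, and you correctly isolate the one point that needs care (that an independent set containing $v$ must avoid all of $N(v)$, so deleting $v$ lands in $G-N[v]$). The paper states this proposition as a known result cited from Chudnovsky and Seymour and gives no proof of its own, so there is nothing to compare against; your derivation fills that gap with the expected elementary argument.
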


Now we are ready to prove Theorem~\ref{main1}
by applying Propositions~\ref{lem1} and~\ref{p3.1}.

\vspace{0.5 cm}

\noindent {\it Proof of Theorem~\ref{main1}}: 
Suppose that the result fails.
Assume that $G=(V,E)$ is a simple graph 
for which the result fails and 
$|V|+|E|$ has the minimum value
among all those graphs for which the result fails. 
We shall complete the proof by showing the following claims.

\noindent {\bf Claim 1}: $n=|V|\ge 2$.

Assume that $n=|V|=1$. Then $E=\emptyset$ as 
$G$ is simple. 
Thus $\calhd G$ is the hypergraph with two vertices 
and no edges, implying that 
$P(\calhd G,\lambda)=\lambda^2$.
Observe that the right-hand side 
of (\ref{main1-eq1}) is 
$$
\lambda(\lambda-1) (1+1/(\lambda-1))=\lambda^2,
$$
implying that Theorem~\ref{main1} holds for this graph, 
a contradiction.

\noindent {\bf Claim 2}: $G$ does not exist.

Let $v$ be any vertex of $G$, by Proposition~\ref{lem1}, we have 
\begin{equation}\relabel{e19}
P(\calhd G,\lambda)=(\lambda-1)\cdot 
P(\calhd {(G-\{v\})},\lambda)
+(\lambda-1)^{d(v)}\cdot 
P(\calhd {(G-N[v])},\lambda).
\end{equation}
By the assumption on $G$,
Theorem~\ref{main1} holds for both 
$G-\{v\}$ and $G-N[v]$. Thus 
\begin{equation}\relabel{e20}
P(\calhd {(G-\{v\})},\lambda)=\lambda \cdot (\lambda-1)^{n-1}\cdot I(G-v,\frac{1}{\lambda-1})
\end{equation}
and 
\begin{equation}\label{e21}
P(\calhd {(G-N[v])},\lambda)
=\lambda \cdot (\lambda-1)^{n-d(v)-1}\cdot I(G-N[v],\frac{1}{\lambda-1}).
\end{equation}
From Proposition \ref{p3.1}
and equalities 
(\ref{e19}), (\ref{e20}) and (\ref{e21}), we obtain 
\begin{align*}
P(\calhd G,\lambda)
=& \lambda(\lambda-1)^{n}\cdot I(G-\{v\},\frac{1}{\lambda-1})\\
&+ \lambda\cdot (\lambda-1)^{n-1}\cdot I(G-N[v],\frac{1}{\lambda-1})\\
=&\lambda \cdot (\lambda-1)^{n} \cdot I(G,\frac{1}{\lambda-1}).
\end{align*}
Thus equality (\ref{main1-eq1}) holds for $G$, 
a contradiction. 
Therefore Claim 2 is proved
and Theorem~\ref{main1} holds.
\hfill {$\Box$}

We end this section by providing a proof
of Corollary~\ref{mainco1} (b).
\vspace{0.3 cm}
\\
\noindent{\it Proof of Corollary~\ref{mainco1} (b)}.
It has been shown in \cite{B}
that the real roots of independence polynomials 
are dense in the interval $(-\infty, 0]$. 
Then Theorem~\ref{main1} implies that 
the real roots of chromatic polynomials of hypergeraphs 
$\calho$ for all graphs $G$ 
are dense in the interval 
$(-\infty, 1]$. 
By Corollary~\ref{pro3-co1}, 
which follows from Proposition~\ref{pro3} in Section 5 directly, 
the real roots of 
the chromatic polynomials of hypergraphs $\calho+K_1$ for all 
graphs $G$ are dense in the interval $(-\infty, 2]$,
where $\calho+K_1$  is the hypergraph obtained from 
$\calho$ by adding a new vertex $u$ 
and adding new edges $\{u,v\}$ for 
all vertices $v$ in $\calho$.
Repeating this process or applying the fact that 
the real roots of chromatic polynomials of graphs
are dense in $[2,\infty)$, 
the result of Corollary~\ref{mainco1} (b) holds.
\hfill {$\Box$}

\section{Proof of Theorem~\ref{main2}}

Let $G=(V,E)$ be a graph of order $n$ and size $m$.
Assume that $G$ may have loops or parallel edges.
We first establish the following recursive 
formula for $P(\calh_G,\lambda)$.

\begin{prop}\relabel{l1} 
For any $e\in E(G)$,
\begin{equation}\relabel{e4}
P(\calh_{G},\lambda)=\lambda P(\calh_{G-e},\lambda)-P(\calh_{G/e},\lambda).
\end{equation}
\end{prop}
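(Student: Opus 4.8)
The plan is to apply the hypergraph deletion--contraction formula (Theorem~\ref{t1}) to $\calh_G$, taking as the distinguished hyperedge the triple $e^*=\{u_e,v_e,w_e\}$ that the edge $e$ of $G$ produces in $\calh_G$. This gives at once
\[
P(\calh_G,\lambda)=P(\calh_G-e^*,\lambda)-P(\calh_G/e^*,\lambda),
\]
so the whole task reduces to identifying the two terms on the right with $\lambda\,P(\calh_{G-e},\lambda)$ and $P(\calh_{G/e},\lambda)$ respectively.

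For the deletion term I would exploit that the auxiliary vertex $w_e$ lies in no hyperedge of $\calh_G$ other than $e^*$, by construction. Hence deleting $e^*$ leaves $w_e$ isolated, while the remaining triples $\{u_f,v_f,w_f\}$ with $f\in E\setminus\{e\}$ are exactly the edges of $\calh_{G-e}$. Thus $\calh_G-e^*$ is the disjoint union of $\calh_{G-e}$ with the edgeless one-vertex hypergraph on $\{w_e\}$, and Proposition~\ref{pro2} gives $P(\calh_G-e^*,\lambda)=\lambda\,P(\calh_{G-e},\lambda)$, since an isolated vertex contributes the factor $\lambda$.

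The contraction term carries the real content. By definition $\calh_G/e^*=(\calh_G-e^*)\cdot e^*$ is obtained by deleting $e^*$ and then merging $u_e,v_e,w_e$ into a single vertex $z$. I would argue that the result is isomorphic to $\calh_{G/e}$: the graph $G/e$ merges $u_e$ and $v_e$ into one vertex $x$ and discards $e$, after which $\calh_{G/e}$ attaches the vertices $\{w_f:f\in E\setminus\{e\}\}$ together with the triples $\{u_f',v_f',w_f\}$, where $u_f',v_f'$ are the ends of $f$ after the merge. Matching $z$ with $x$ and fixing every other vertex sends each remaining triple of $\calh_G/e^*$ to the corresponding triple of $\calh_{G/e}$, so $P(\calh_G/e^*,\lambda)=P(\calh_{G/e},\lambda)$. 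Substituting both identifications into the deletion--contraction identity yields (\ref{e4}).

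The main obstacle is making this contraction isomorphism airtight in the presence of loops and parallel edges, which $G$ is allowed to have. If $f\ne e$ is parallel to $e$, the merge turns its triple into the size-two set $\{z,w_f\}$, and one must check that this coincides with what $\calh_{G/e}$ produces from the loop that $f$ becomes in $G/e$; and if $e$ itself is a loop, then $e^*$ already has size two, so one must verify that the recursion still reads correctly under the convention that contracting a loop deletes it. I would settle these cases by checking directly that the two hypergraphs agree as multi-hypergraphs, tracking the distinct auxiliary vertices $w_f$, which prevent any unintended collapsing of hyperedges.
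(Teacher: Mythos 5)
Your proposal is correct and follows essentially the same route as the paper: apply Theorem~\ref{t1} to the hyperedge $\{u_e,v_e,w_e\}$, observe that deletion leaves $w_e$ isolated (giving the factor $\lambda$) and that contraction yields $\calh_{G/e}$. Your extra care with loops and parallel edges is sound and only makes explicit what the paper's one-line identification of $\calh_G/e'$ with $\calh_{G/e}$ leaves implicit.
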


\begin{proof}
Assume that $u$ and $v$ are the two ends of $e$ in $G$.
It is possible that $u=v$, as $e$ may be a loop.
Let $e'=\{u,v,e\}$.
So $e'$ is the edge in $\calh_G$ corresponding to $e$.
By Theorem~\ref{t1}, we have 
\begin{equation}\relabel{eq20}
P(\calh_{G},\lambda)=P(\calh_{G}-e',\lambda)-P(\calh_{G}/e',\lambda)
\end{equation}
Note that $\calh_{G}-e'$ consists of an isolated vertex and the hypergraph $\calh_{G-e}$,
and $\calh_{G}/e'$ is actually the hypergraph 
$\calh_{G/e}$.
Thus the result holds. 
\end{proof}

Listed below are some other properties of $P(\calh_{G},\lambda)$ which 
can be proved easily by applying 
Theorem \ref{t2} and Proposition \ref{l1}.

\begin{prop}\relabel{p1}
Let $G$ be a multigraph of order $n$.
\begin{enumerate}
\item[(a)] If $G$ is an empty graph, then $P(\calh_{G}, \lambda)=\lambda^{n}$;

\item [(b)] If $e$ is a loop of $G$, then $P(\calh_{G}, \lambda)= (\lambda-1) P(\calh_{G-e}, \lambda)$;

\item [(c)] If $e$ is a bridge of $G$,, then $P(\calh_{G}, \lambda)= (\lambda^{2}-1) P(\calh_{G/e}, \lambda)$.
\end{enumerate}
\end{prop}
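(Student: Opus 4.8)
The plan is to derive all three identities from the deletion–contraction recursion already established in Proposition~\ref{l1}, namely $P(\calh_{G},\lambda)=\lambda P(\calh_{G-e},\lambda)-P(\calh_{G/e},\lambda)$, treating (a) as a direct consequence of the definition and reducing (b) and (c) to simple structural identifications of $\calh_{G-e}$ and $\calh_{G/e}$, with (c) also calling on Proposition~\ref{pro2} and Theorem~\ref{t2}.

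For (a) I would argue straight from the definition of a weak proper colouring: if $G$ has no edges then $E=\emptyset$, so $\calh_{G}$ has vertex set $V$ with $|V|=n$ and empty edge set, and every map $\phi:V\to\{1,\dots,\lambda\}$ is vacuously a weak proper $\lambda$-colouring; hence $P(\calh_{G},\lambda)=\lambda^{n}$. For (b) I would invoke Proposition~\ref{l1}. The point is that contracting a loop deletes it, so $G/e=G-e$ and therefore $\calh_{G/e}=\calh_{G-e}$; substituting $P(\calh_{G/e},\lambda)=P(\calh_{G-e},\lambda)$ into the recursion gives $P(\calh_{G},\lambda)=\lambda P(\calh_{G-e},\lambda)-P(\calh_{G-e},\lambda)=(\lambda-1)P(\calh_{G-e},\lambda)$.

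For (c) I would again start from Proposition~\ref{l1}, so the entire task reduces to proving the single identity $P(\calh_{G-e},\lambda)=\lambda\,P(\calh_{G/e},\lambda)$ when $e$ is a bridge with ends $u\neq v$. Since a bridge separates its endpoints in $G-e$, I can write $G-e=G_{1}\cup G_{2}$ as a disjoint union of subgraphs with $u\in V(G_{1})$ and $v\in V(G_{2})$ and $V(G_{1})\cap V(G_{2})=\emptyset$. Then $\calh_{G_{1}}$ and $\calh_{G_{2}}$ are vertex-disjoint, so they are the components of $\calh_{G-e}$ and Proposition~\ref{pro2} gives $P(\calh_{G-e},\lambda)=P(\calh_{G_{1}},\lambda)P(\calh_{G_{2}},\lambda)$. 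On the other hand, contracting $e$ merges $u$ and $v$ into a single vertex, so $\calh_{G/e}$ is obtained by gluing $\calh_{G_{1}}$ and $\calh_{G_{2}}$ at exactly that one vertex, with no shared edge, i.e.\ $\calh_{G_{1}}\cap\calh_{G_{2}}=K_{1}$; Theorem~\ref{t2} with $p=1$ and $P(K_{1},\lambda)=\lambda$ then yields $P(\calh_{G/e},\lambda)=P(\calh_{G_{1}},\lambda)P(\calh_{G_{2}},\lambda)/\lambda$. Comparing the two expressions gives $P(\calh_{G-e},\lambda)=\lambda\,P(\calh_{G/e},\lambda)$, and feeding this back into Proposition~\ref{l1} produces $P(\calh_{G},\lambda)=\lambda^{2}P(\calh_{G/e},\lambda)-P(\calh_{G/e},\lambda)=(\lambda^{2}-1)P(\calh_{G/e},\lambda)$.

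The step that needs the most care is the clean identification of $\calh_{G/e}$ as a one-vertex gluing with $\calh_{G_{1}}\cap\calh_{G_{2}}=K_{1}$: one must verify that the merged endpoint is the only vertex shared by the two sides and that no hyperedge straddles both sides (which holds because distinct edges of $G$ contribute distinct auxiliary vertices $w_{e'}$, so no two hyperedges coincide). This is precisely where the bridge hypothesis is essential, and it is the crux that turns the bare recursion of Proposition~\ref{l1} into the factor $\lambda^{2}-1$.
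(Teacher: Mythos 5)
Your proposal is correct and follows exactly the route the paper intends: the paper gives no details beyond saying that Proposition~\ref{p1} ``can be proved easily by applying Theorem~\ref{t2} and Proposition~\ref{l1}'', and your argument fills in precisely those steps (definition for (a), $G/e=G-e$ for a loop in (b), and the one-vertex gluing $\calh_{G_1}\cap\calh_{G_2}=K_1$ via Theorem~\ref{t2} together with Proposition~\ref{pro2} for (c)). No gaps.
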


Some fundamental properties on Tutte polynomials $T_G(x,y)$
are needed for proving Theorem~\ref{main2}.


\begin{prop}[\cite{J, WD}]\relabel{p2}
Let $G$ be a multigraph.  
\begin{enumerate}
\item[(a)] If $G$ is an empty graph, then $T_G(x,y)=1$; 

\item [(b)] If $e$ is a loop of $G$, then $T_G(x,y)=y\cdot T_{G-e}(x,y)$;

\item [(c)] If $e$ is a bridge of $G$, 
then $T_G(x,y)=x\cdot T_{G/e}(x,y)$;

\item [(d)] For any $e\in E(G)$, if $e$ is neither a loop nor a bridge, then 
$T_G(x,y)=T_{G-e}(x,y)+T_{G/e}(x,y)$.
\end{enumerate}
\end{prop}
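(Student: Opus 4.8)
Proof proposal for Proposition~\ref{p2}.

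The plan is to argue directly from the rank--nullity (subset-expansion) definition (\ref{tuttep}), writing $r_A$ for $r(A)=|V|-c(A)$ and $n_A=|A|-r_A$ for the nullity, so that $T_G(x,y)=\sum_{A\subseteq E}(x-1)^{r_E-r_A}(y-1)^{n_A}$. The uniform device for parts (b)--(d) is to split the sum over all $A\subseteq E$ according to whether $A$ contains the distinguished edge $e$, reparametrising the second block by $A=B\cup\{e\}$ with $B\subseteq E-e$, and then to rewrite the exponents using how the rank function transforms under deletion and contraction. Part~(a) is immediate: if $E=\emptyset$ the only subset is $A=\emptyset$, and since $c(\emptyset)=|V|$ we get $r_E=r_\emptyset=0$ and $n_\emptyset=0$, so the single term is $(x-1)^0(y-1)^0=1$.

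The backbone of the remaining three parts is a pair of rank identities. First, for every $B\subseteq E-e$ the rank of $B$ depends only on the spanning subgraph $(V,B)$, so $r_G(B)=r_{G-e}(B)$; and globally $r_G(E)=r_{G-e}(E-e)$ exactly when $e$ is not a bridge, while $r_G(E)=r_{G-e}(E-e)+1$ when $e$ is a bridge. Second, for a \emph{non-loop} $e$ one has, for all $B\subseteq E-e$, the contraction relation $r_G(B\cup\{e\})=r_{G/e}(B)+1$, whence the nullity is preserved, $|B\cup\{e\}|-r_G(B\cup\{e\})=|B|-r_{G/e}(B)$, and taking $B=E-e$ gives $r_G(E)=r_{G/e}(E-e)+1$. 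For part~(d), with $e$ neither a loop nor a bridge, the block $e\notin A$ becomes $\sum_{B\subseteq E-e}(x-1)^{r_G(E)-r_G(B)}(y-1)^{n_B}$, which equals $T_{G-e}(x,y)$ by the deletion identity, while the block $e\in A$ becomes $\sum_{B\subseteq E-e}(x-1)^{r_G(E)-r_G(B\cup\{e\})}(y-1)^{|B\cup\{e\}|-r_G(B\cup\{e\})}=T_{G/e}(x,y)$ by the contraction identity; summing yields $T_G=T_{G-e}+T_{G/e}$. For part~(c), $e$ a bridge forces the endpoints of $e$ to lie in distinct components of every $(V,B)$ with $B\subseteq E-e$, so $r_G(B\cup\{e\})=r_G(B)+1$ throughout; both blocks then share the common factor $(x-1)^{r_G(E)-r_G(B)-1}(y-1)^{n_B}$, and since the $e\notin A$ block carries an extra $(x-1)$ the two combine to $\bigl[(x-1)+1\bigr]=x$ times $T_{G/e}(x,y)$. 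For part~(b), $e$ a loop gives $r_G(B\cup\{e\})=r_G(B)$ and $r_G(E)=r_{G-e}(E-e)$, so the $e\in A$ block carries one extra factor $(y-1)$ relative to the matching $e\notin A$ term, and the two combine to $\bigl[1+(y-1)\bigr]=y$ times $T_{G-e}(x,y)$.

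The main obstacle is verifying the contraction rank identity $r_G(B\cup\{e\})=r_{G/e}(B)+1$ with the multigraph conventions in force, since contraction of a non-loop identifies the two endpoints of $e$ and one must track the change in the component count $c$ correctly (and separately note that contracting a loop coincides with deleting it, so that the statement in~(b) is phrased with $G-e$ rather than $G/e$). Once this bookkeeping is pinned down, the bridge and non-bridge dichotomy for the global rank $r_G(E)$ and the loop/non-loop dichotomy for $r_G(B\cup\{e\})-r_G(B)$ feed mechanically into the split sums, and each of (b)--(d) follows by extracting the common factor and recognising the residual sum as $T_{G-e}$ or $T_{G/e}$.
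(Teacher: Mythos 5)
Your proof is correct. Note, however, that the paper itself gives no proof of Proposition~\ref{p2}: it is quoted as a known result with citations to Ellis-Monaghan--Merino and Welsh, so there is no in-paper argument to compare against. Your route --- splitting the subset expansion (\ref{tuttep}) over $e\in A$ versus $e\notin A$ and tracking the rank via the identities $r_G(B)=r_{G-e}(B)$, $r_G(E)=r_{G-e}(E-e)+[\text{$e$ a bridge}]$, and $r_G(B\cup\{e\})=r_{G/e}(B)+1$ for non-loop $e$ --- is precisely the standard verification found in those references, and you handle the one genuine subtlety correctly: the contraction identity survives the multigraph conventions because $c_{G/e}(B)=c_G(B\cup\{e\})$ even when contraction turns parallel edges into loops, since loops never affect component counts.
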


We are now ready to prove Theorem~\ref{main2}.

\vspace{0.5 cm}

\noindent {\it Proof of Theorem~\ref{main2}}:
We will prove the result by induction on the size $m$ of $G$. 

If $m=0$, Theorem~\ref{main2} holds for $G$ 
by Propositions \ref{p1} (a) and \ref{p2} (a).

Assume that Theorem~\ref{main2} holds for any graph 
of size less than $m$, where $m>0$.
Now we assume that $G=(V,E)$ is a graph of size $m$.
Let $e$ be any edge in $G$.

\noindent {\bf Case 1}: $e$ is a loop.

By Propositions~\ref{p1} (b) and~\ref{p2} (b),
\begin{equation}\label{e50}
P(\calh_G,\lambda)=(\lambda-1)P(\calh_{G-e},\lambda),
\quad
T_G(x,y)=yT_{G-e}(x,y).
\end{equation}
By the inductive assumption, 
Theorem~\ref{main2} holds for $G-e$, i.e., 
\begin{equation}\label{e5}
P(\calh_{G-e},\lambda)=\lambda^{m-1-n+2c(G)}
\cdot (-1)^{n+c(G)}\cdot 
T_{G-e}(1-\lambda^{2},(\lambda-1)/{\lambda}).
\end{equation}
Thus Theorem~\ref{main2} holds for $G$
by equalities in (\ref{e50}) and (\ref{e5}).

\noindent {\bf Case 2}: $e$ is a bridge.

By Propositions~\ref{p1} (c) and~\ref{p2} (c),
\begin{equation}\label{e510}
P(\calh_G,\lambda)=(\lambda^2-1)P(\calh_{G/e},\lambda),
\quad
T_G(x,y)=xT_{G/e}(x,y).
\end{equation}
By the inductive assumption, 
Theorem~\ref{main2} holds for $G/e$, i.e., 
\begin{equation}\label{e51}
P(\calh_{G/e},\lambda)=\lambda^{m-n+2c(G)}
\cdot (-1)^{n-1+c(G)}\cdot 
T_{G/e}(1-\lambda^{2},(\lambda-1)/{\lambda}).
\end{equation}
Thus Theorem~\ref{main2} holds for $G$
by equalities in (\ref{e510}) and (\ref{e51}).

\noindent {\bf Case 3}: $e$ is neither a bridge nor a loop.

Then $c(G-e)=c(G/e)=c(G)$.
By inductive assumption, 
Theorem~\ref{main2} holds for $G-e$ and $G/e$, i.e., 
\begin{equation}\relabel{e9}
P(\calh_{G-e},\lambda)=\lambda^{m-1-n+2c(G)}\cdot (-1)^{n+c(G)}\cdot T_{G-e}(1-\lambda^{2},({\lambda-1})/{\lambda})
\end{equation}
and 
\begin{equation}\relabel{e10}
P(\calh_{G/e},\lambda)
=\lambda^{m-n+2c(G)}\cdot (-1)^{n-1+c(G)}\cdot 
T_{G/e}(1-\lambda^{2},({\lambda-1})/{\lambda}).
\end{equation}
By (\ref{e9}), (\ref{e10}) and Proposition~\ref{p2} (d),
it can be verified that Theorem~\ref{main2} holds for $G$.
\hfill $\Box$

\section{Proofs of Theorems~\ref{main3} and~\ref{main4-0} }

In this section, we will complete the proofs of 
Theorems~\ref{main3} and~\ref{main4-0}.

\vspace{0.3 cm}

\noindent {\it Proof of Theorem~\ref{main3}}: 
By a result due to Tomescu \cite{Tomescu 1998},
the coefficient of $\lambda$ in 
$P(\calh,\lambda)$ is equal to 
$$
a_1=\sum_{j} (-1)^j N_{j}
$$
where $N_{j}$ is the number of 
connected and spanning sub-hypergraphs of $\calh$
with exactly $j$ edges.
By the given conditions, 
any sub-hypergraph $(\calv, \cale')$ of $\calh$
is connected if and only if 
$B(\calh)\subseteq \cale'$.
Assume that 
$r=|B(\calh)|$ and $k=|\cale|-|B(\calh)|>0$. Then 
$N_j={k\choose j-r}$ and 
$$
a_1=\sum_{j=r}^{r+k} (-1)^{j} {k\choose j-r}
=0.
$$  
Thus the result holds.
\hfill $\Box$

Some results are needed for  proving 
Theorem~\ref{main4-0}.
Let $\Phi(\calh)$ be the set of those partitions 
$\{\calv_1,\cdots,\calv_k\}$ of $\calv$ 
such that each $\calv_i$ is an non-empty 
member in $\cali(\calh)$.
By the definition of $P(\calh,\lambda)$,  
\begin{equation}\relabel{e11}
P(\calh,\lambda)
=
\sum_{\{\calv_1,\cdots,\calv_k\} \in \Phi(\calh)}
(\lambda)_k,
\end{equation}
where $(x)_0=1$ and 
$(x)_k=x(x-1)\cdots (x-k+1)$ for any number $x$ 
and positive integer $k$.
By (\ref{e11}), we can deduce the following result.

\begin{prop}\relabel{pro3}
Let $w$ be a fixed vertex in $\calh=(\calv,\cale)$.
Then 
\begin{equation}\relabel{e7}
P(\calh,\lambda)
=\lambda\sum_{w\in \calv_0\in {\cal I}(\calh)}
P(\calh-\calv_0,\lambda-1).
\end{equation}
\end{prop}

\begin{proof}
By (\ref{e11}), we can assume that 
\begin{equation}\relabel{e12}
P(\calh,\lambda)
=\sum_{\{\calv_0,\calv_1,\cdots,\calv_k\} \in \Phi(\calh)
\atop w\in \calv_0}
(\lambda)_{k+1}.
\end{equation}
For any $\calv_0\in \cali(\calh)$ with $w\in \calv_0$,
$\{\calv_0, \calv_1,\cdots,\calv_k\}\in \Phi(\calh)$ 
if and only if $\{\calv_1,\cdots,\calv_k\}
\in \Phi(\calh-\calv_0)$.
Thus 
\begin{equation}\relabel{e40}
P(\calh,\lambda)
=\lambda 
\sum_{w\in \calv_0\in \cali(\calh)}
\sum_{\{\calv_1,\cdots,\calv_k\} \in \Phi(\calh-\calv_0)}
(\lambda-1)_{k}.
\end{equation}
By (\ref{e11}) again, the result holds.
\end{proof}

By Proposition~\ref{pro3}, $\lambda$ is a factor of 
$P(\calh,\lambda)$ for any hypergraph $\calh$.
Actually $\lambda-1$ is also a factor of 
$P(\calh,\lambda)$ whenever $\calh$ is not an empty graph.

\begin{C}\relabel{pro3-co2}
For any hypergraph $\calh=(\calv,\cale)$,
$\lambda^{c(\calh)}$ and $(\lambda-1)^{c'(\calh)}$
are factors of $P(\calh,\lambda)$,
where $c(\calh)$ is the number of components of $\calh$
and $c'(\calh)$ is the number of those components of $\calh$
which contain edges.
\end{C}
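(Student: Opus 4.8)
The plan is to reduce the statement to the component factorization and then establish, for a single component, the presence of the factors $\lambda$ and (when the component carries an edge) $\lambda-1$. First I would apply Proposition~\ref{pro2} to write $P(\calh,\lambda)=\prod_{i=1}^{c(\calh)}P(\calh_i,\lambda)$, where $\calh_1,\dots,\calh_{c(\calh)}$ are the components of $\calh$. It then suffices to prove two local claims: that $\lambda\mid P(\calh_i,\lambda)$ for every component $\calh_i$, and that $(\lambda-1)\mid P(\calh_i,\lambda)$ for every component $\calh_i$ containing at least one edge. Multiplying over all $c(\calh)$ components yields the factor $\lambda^{c(\calh)}$, while multiplying the factor $\lambda-1$ over exactly the $c'(\calh)$ edge-carrying components yields $(\lambda-1)^{c'(\calh)}$.

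For the first claim I would simply invoke the observation recorded immediately after Proposition~\ref{pro3}: the identity $P(\calh_i,\lambda)=\lambda\sum_{w\in\calv_0\in\cali(\calh_i)}P(\calh_i-\calv_0,\lambda-1)$ displays $\lambda$ as an explicit factor, so $\lambda\mid P(\calh_i,\lambda)$ with no additional argument.

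For the second claim, by the factor theorem it is enough to verify $P(\calh_i,1)=0$ whenever $\calh_i$ has an edge. I would argue this straight from the definition of a weak proper colouring: a weak proper $1$-colouring must satisfy $|\{\phi(v):v\in e\}|>1$ for each edge $e$, which is impossible with a single available colour as soon as some edge is non-empty; hence no such colouring exists and $P(\calh_i,1)=0$. Equivalently, setting $\lambda=1$ in Proposition~\ref{pro3} collapses the sum to the single term in which $\calv_0$ is the whole vertex set of $\calh_i$, and that term survives only when $\calh_i$ is edgeless, since $P(\calh',0)=0$ for every $\calh'$ with at least one vertex. Either way, $(\lambda-1)\mid P(\calh_i,\lambda)$ for each edge-carrying component.

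The argument involves no serious obstacle; the only point requiring care is the bookkeeping in the product step, where one must attach the factor $\lambda-1$ to a component precisely when that component contains an edge. This is exactly the condition defining $c'(\calh)$, so the resulting exponent is $c'(\calh)$ rather than $c(\calh)$, matching the statement.
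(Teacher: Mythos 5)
Your proof is correct. The reduction to connected components via Proposition~\ref{pro2} and the extraction of the factor $\lambda$ from the identity in Proposition~\ref{pro3} are exactly what the paper does (the paper states the $\lambda$-factor claim immediately after Proposition~\ref{pro3} and only writes out the $(\lambda-1)$-part). Where you differ is in the mechanism for the factor $\lambda-1$: your primary argument evaluates $P(\calh_i,1)$ directly from the combinatorial definition --- with a single colour no non-empty edge $e$ can satisfy $|\{\phi(v):v\in e\}|>1$, so $P(\calh_i,1)=0$ and the factor theorem applies (this is legitimate since $P$ is a polynomial agreeing with the colouring count at positive integers). The paper instead argues structurally from (\ref{e7}): since $\calh_i$ is non-empty, $\calv\notin\cali(\calh_i)$, so every $\calh_i-\calv_0$ with $\calv_0\in\cali(\calh_i)$ retains a vertex, each summand $P(\calh_i-\calv_0,\lambda-1)$ therefore carries a factor $\lambda-1$, and hence so does the whole sum. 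Your ``equivalently'' remark reproduces essentially this argument, so you in fact cover both routes. The direct evaluation is the more elementary and self-contained of the two; the paper's version has the advantage of exhibiting the factor inside the identity (\ref{e7}) itself, a pattern that is reused later in the proofs of Proposition~\ref{main4} and Theorem~\ref{main4-0}.
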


\begin{proof}
We just prove that 
$(\lambda-1)^{c'(\calh)}$ is a factor of $P(\calh,\lambda)$.
It suffices to show that if $\calh$ is connected and 
non-empty, then 
$\lambda-1$ is a factor of $P(\calh,\lambda)$.
As $\calh$ is not empty, $\calv\notin \cali(\calh)$.
Thus $P(\calh-\calv_0,\lambda)$ has a factor $\lambda$ 
for every $\calv_0\in \cali(\calh)$.
By Proposition~\ref{pro3}, 
$\lambda-1$ is a factor of $P(\calh,\lambda)$.
\end{proof}


Recall that for a hypergraph ${\cal H}$,
 ${\cal H}+K_1$ is the hypergraph obtained from  ${\cal H}$
 by adding a new vertex $u$ and adding new edges 
 $\{u,v\}$ for all vertices $v$ in  ${\cal H}$.
 By Proposition~\ref{pro3}, the following result is obtained.

\begin{C}\relabel{pro3-co1}
For any hypergraph ${\cal H}$, 
$P({\cal H}+K_1,\lambda)=\lambda P({\cal H},\lambda-1).$
\end{C}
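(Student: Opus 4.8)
The plan is to apply Proposition~\ref{pro3} to the hypergraph $\calh+K_1$, taking the fixed vertex $w$ to be the newly added apex vertex $u$. Write $\calh=(\calv,\cale)$ and set $\calh'=\calh+K_1$, so that $\calh'$ has vertex set $\calv'=\calv\cup\{u\}$ and edge set $\cale'=\cale\cup\{\{u,v\}:v\in\calv\}$. Proposition~\ref{pro3} with $w=u$ then gives
\begin{equation*}
P(\calh',\lambda)=\lambda\sum_{u\in\calv_0\in\cali(\calh')}P(\calh'-\calv_0,\lambda-1),
\end{equation*}
so the whole identity reduces to understanding which members of $\cali(\calh')$ contain $u$.

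The key step is to show that $\{u\}$ is the \emph{only} member of $\cali(\calh')$ containing $u$. Indeed, suppose $\calv_0\in\cali(\calh')$ with $u\in\calv_0$ and $\calv_0\neq\{u\}$; then $\calv_0$ contains some $v\in\calv$, and since $\{u,v\}$ is an edge of $\calh'$ with $\{u,v\}\subseteq\calv_0$, the induced sub-hypergraph $\calh'[\calv_0]$ is non-empty, contradicting $\calv_0\in\cali(\calh')$. Conversely, $\{u\}$ is trivially independent since $\calh'[\{u\}]$ has no edges. Hence the sum above collapses to the single term indexed by $\calv_0=\{u\}$, giving $P(\calh',\lambda)=\lambda\,P(\calh'-\{u\},\lambda-1)$.

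It then remains to identify $\calh'-\{u\}$ with $\calh$. By definition $\calh'-\{u\}=\calh'[\calv]$ has vertex set $\calv$ and edge set $\{e\in\cale':e\subseteq\calv\}$; each added apex edge $\{u,v\}$ contains $u$ and is therefore excluded, while every original edge $e\in\cale$ satisfies $e\subseteq\calv$ and is retained, so $\calh'-\{u\}=\calh$. Substituting yields $P(\calh+K_1,\lambda)=\lambda\,P(\calh,\lambda-1)$, as required.

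I expect the only point needing genuine care to be the independent-set characterization in the second paragraph: one must use precisely that the apex edges $\{u,v\}$ have size two, so that they really forbid $u$ from coexisting with any other vertex inside an independent set, together with the fact that membership in $\cali(\calh')$ is defined via emptiness of the induced sub-hypergraph. Everything else is a direct unwinding of the definitions of $\calh+K_1$, of $\calh-\calv_0=\calh[\calv-\calv_0]$, and of $\cali(\cdot)$, so no substantial computation is involved.
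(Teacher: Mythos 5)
Your proof is correct and follows essentially the same route as the paper: the paper derives Corollary~\ref{pro3-co1} directly from Proposition~\ref{pro3} without spelling out the details, and your argument---applying Proposition~\ref{pro3} at the apex vertex $u$, observing that the size-two apex edges force $\{u\}$ to be the unique member of $\cali(\calh+K_1)$ containing $u$, and identifying $(\calh+K_1)-\{u\}$ with $\calh$---is exactly that intended unwinding. There are no gaps.
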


By Corollary~\ref{pro3-co1},
$P({\cal H}+K_1,\lambda)$ has a factor $(\lambda-1)^2$ 
if and only if $P({\cal H},\lambda)$ has a factor $\lambda^2$.
If $\calh$ is connected, then 
$\calh+K_1$ is not separable. 
By Theorem~\ref{main3}, 
there exist non-separable hypergraphs 
whose chromatic polynomials have a factor $(\lambda-1)^2$.

Now we establish two important results for 
proving Theorem~\ref{main4-0}.

\begin{prop}\relabel{main4}
Let $\calh=(\calv,\cale)$ be any connected hypergraph
with a vertex $w$ and two proper subsets 
$\calv_1$ and $\calv_2$ of $\calv$ 
such that $\calv_1\cup \calv_2=\calv$, 
$\calv_1\cap \calv_2=\{w\}$
and for each $e\in \cale$, 
either $w\in e$ or $e\subseteq \calv_i$ for some $i$.
Then $(\lambda-1)^2$ is a factor of $P(\calh,\lambda)$
if and only if $\lambda^2$ is a factor of 
the following polynomial:
\begin{equation}
\sum_{\calv_0\in \cali_{(\calv_1,\calv_2)}(\calh)}
P(\calh-\calv_0,\lambda),
\end{equation}
where $\cali_{(\calv_1,\calv_2)}(\calh)$ is the set of those 
$\calv_0\in \cali(\calh)$ with $\calv_i\subseteq \calv_0$ 
for some $i\in \{1,2\}$.
\end{prop}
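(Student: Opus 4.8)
The plan is to start from the factorisation in Proposition~\ref{pro3}, namely $P(\calh,\lambda)=\lambda\,F(\lambda)$ with $F(\lambda)=\sum_{w\in\calv_0\in\cali(\calh)}P(\calh-\calv_0,\lambda-1)$. Since $\lambda$ and $(\lambda-1)^2$ are coprime polynomials, $(\lambda-1)^2$ divides $P(\calh,\lambda)$ if and only if it divides $F(\lambda)$; and after the substitution $\mu=\lambda-1$ this last condition is equivalent to $\mu^2$ dividing $G(\mu)=\sum_{w\in\calv_0\in\cali(\calh)}P(\calh-\calv_0,\mu)$. Thus I first reduce the whole statement to comparing $G(\mu)$ with the target sum $G_1(\mu)=\sum_{\calv_0\in\cali_{(\calv_1,\calv_2)}(\calh)}P(\calh-\calv_0,\mu)$, and to proving that $\mu^2$ divides one exactly when it divides the other.

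The heart of the argument is to control the extra terms $G(\mu)-G_1(\mu)$. Note that every member of $\cali_{(\calv_1,\calv_2)}(\calh)$ automatically contains $w$, because $w\in\calv_1\cap\calv_2$, so $\cali_{(\calv_1,\calv_2)}(\calh)$ really is a subset of the index set of $G$; the difference $G-G_1$ therefore runs over those $\calv_0\in\cali(\calh)$ with $w\in\calv_0$, $\calv_1\not\subseteq\calv_0$ and $\calv_2\not\subseteq\calv_0$. For each such $\calv_0$ I will show that $\mu^2$ divides the single term $P(\calh-\calv_0,\mu)$. The key observation is that since $w\in\calv_0$, every edge of $\calh-\calv_0$ avoids $w$ and hence lies inside some $\calv_i$ by hypothesis; combined with $\calv_1\cap\calv_2=\{w\}\subseteq\calv_0$, this shows $\calh-\calv_0$ is the disjoint union of $\calh[\calv_1\setminus\calv_0]$ and $\calh[\calv_2\setminus\calv_0]$ with no edge crossing between the two sides. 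By Proposition~\ref{pro2}, $P(\calh-\calv_0,\mu)=P(\calh[\calv_1\setminus\calv_0],\mu)\cdot P(\calh[\calv_2\setminus\calv_0],\mu)$. Because $\calv_1\not\subseteq\calv_0$ and $\calv_2\not\subseteq\calv_0$, both $\calv_1\setminus\calv_0$ and $\calv_2\setminus\calv_0$ are non-empty, and every hypergraph on a non-empty vertex set has $\mu$ as a factor of its chromatic polynomial (by Proposition~\ref{pro3}); so each factor contributes a $\mu$, forcing $\mu^2\mid P(\calh-\calv_0,\mu)$. Summing, $\mu^2$ divides the whole of $G(\mu)-G_1(\mu)$.

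Combining the two parts gives $\mu^2\mid G(\mu)$ if and only if $\mu^2\mid G_1(\mu)$, since $G$ and $G_1$ differ by a multiple of $\mu^2$. Reading this back through the chain of equivalences of the first paragraph, and noting that divisibility of a polynomial by the square of its variable is independent of the name of the variable (so $\mu^2\mid G_1(\mu)$ is literally the assertion $\lambda^2\mid\sum_{\calv_0\in\cali_{(\calv_1,\calv_2)}(\calh)}P(\calh-\calv_0,\lambda)$), yields exactly the claimed equivalence.

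I expect the main obstacle to be the bookkeeping in the disconnection step: one must verify precisely that deleting the independent set $\calv_0\ni w$ severs all edges joining the two sides, which is where the hypothesis that every edge either contains $w$ or is contained in a single $\calv_i$ is used in full, and that each surviving induced sub-hypergraph genuinely retains a vertex so that it contributes a factor of $\mu$. The remaining ingredients — the coprimality reduction, the change of variable $\mu=\lambda-1$, and the identification of the residual sum with $G_1$ — are routine once this structural fact is established.
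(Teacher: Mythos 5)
Your proof is correct and follows essentially the same route as the paper: both split the sum in Proposition~\ref{pro3} according to whether $\calv_0$ lies in $\cali_{(\calv_1,\calv_2)}(\calh)$, and both dispose of the remaining terms by observing that $\calh-\calv_0$ is then disconnected with two non-empty parts, so $(\lambda-1)^2$ divides $P(\calh-\calv_0,\lambda-1)$. The only cosmetic difference is that you obtain the factor $\lambda^2$ for the disconnected $\calh-\calv_0$ via Proposition~\ref{pro2} together with the single-factor-$\lambda$ fact, where the paper cites Corollary~\ref{pro3-co2}; your explicit verification that both $\calv_i\setminus\calv_0$ are non-empty is a welcome detail the paper leaves implicit.
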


\begin{proof}
By Proposition~\ref{pro3}, 
\begin{equation}\relabel{e41}
P(\calh,\lambda)
=\lambda \sum_{\calv_0\in \cali_{(\calv_1,\calv_2)}}
P(\calh-\calv_0,\lambda-1)
+\lambda \sum_{w\in \calv_0\in \cali(\calh)\atop 
\calv_0\notin \cali_{(\calv_1,\calv_2)}}
P(\calh-\calv_0,\lambda-1).
\end{equation}
For any $\calv_0\in \cali(\calh)$ 
with $w\in \calv_0$,
if $\calv_0\notin \cali_{(\calv_1,\calv_2)}(\calh)$, 
then $\calh-\calv_0$ is disconnected and 
Corollary~\ref{pro3-co2} implies that 
$\lambda^2$ is a factor of $P(\calh-\calv_0,\lambda)$
and thus
$(\lambda-1)^2$ is a factor of $P(\calh-\calv_0,\lambda-1)$.
Hence the result follows from (\ref{e41}).
\end{proof}

\vspace{0.3 cm}

Let $\cali'_{(\calv_1,\calv_2)}(\calh)$ 
be the set of those members $\calv_0$ in 
$\cali_{(\calv_1,\calv_2)}(\calh)$
such that $\calh-\calv_0$ is connected. 
For each 
$\calv_0\in \cali_{(\calv_1,\calv_2)}(\calh)-
\cali'_{(\calv_1,\calv_2)}(\calh)$,
$\calh-\calv_0$ is disconnected and thus 
$P(\calh-\calv_0,\lambda)$ has a factor $\lambda^2$ 
by Corollary~\ref{pro3-co2}. 
By Proposition~\ref{main4}, we get the following result.

\begin{C}\relabel{main4-cor3}
Let $\calh=(\calv,\cale)$ be any connected hypergraph
with  a vertex $w$ and two proper subsets 
$\calv_1$ and $\calv_2$ of $\calv$ 
such that $\calv_1\cup \calv_2=\calv$, 
$\calv_1\cap \calv_2=\{w\}$
and for each $e\in \cale$, 
either $w\in e$ or $e\subseteq \calv_i$ for some $i$.
Then $(\lambda-1)^2$ is a factor of $P(\calh,\lambda)$
if and only if $\lambda^2$ is a factor of 
\begin{equation}\relabel{main4-cor3-eq1}
\sum_{\calv_0\in \cali'_{(\calv_1,\calv_2)}(\calh)}P(\calh-\calv_0,\lambda).
\end{equation}
\end{C}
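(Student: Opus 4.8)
The plan is to deduce this corollary directly from Proposition~\ref{main4} by restricting the summation index set to those $\calv_0$ for which $\calh-\calv_0$ stays connected, and then checking that the terms thrown away are individually divisible by $\lambda^2$ and so cannot affect the divisibility question. Since Proposition~\ref{main4} already reduces the factor-$(\lambda-1)^2$ problem for $P(\calh,\lambda)$ to a factor-$\lambda^2$ problem for a sum indexed by $\cali_{(\calv_1,\calv_2)}(\calh)$, the entire task is to trade that index set for the smaller one $\cali'_{(\calv_1,\calv_2)}(\calh)$.

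First I would invoke Proposition~\ref{main4}: under the stated hypotheses, $(\lambda-1)^2$ is a factor of $P(\calh,\lambda)$ if and only if $\lambda^2$ is a factor of $\sum_{\calv_0\in \cali_{(\calv_1,\calv_2)}(\calh)}P(\calh-\calv_0,\lambda)$. Thus it suffices to compare the $\lambda^2$-divisibility of this full sum with that of the restricted sum (\ref{main4-cor3-eq1}). I would then split the index set as $\cali_{(\calv_1,\calv_2)}(\calh)=\cali'_{(\calv_1,\calv_2)}(\calh)\cup\left(\cali_{(\calv_1,\calv_2)}(\calh)-\cali'_{(\calv_1,\calv_2)}(\calh)\right)$, recalling that, by definition, $\cali'_{(\calv_1,\calv_2)}(\calh)$ collects exactly those $\calv_0$ with $\calh-\calv_0$ connected.

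Next I would handle the complementary part. For each $\calv_0\in\cali_{(\calv_1,\calv_2)}(\calh)-\cali'_{(\calv_1,\calv_2)}(\calh)$ the sub-hypergraph $\calh-\calv_0$ is disconnected, so its number of components satisfies $c(\calh-\calv_0)\ge 2$; Corollary~\ref{pro3-co2} then gives that $\lambda^{c(\calh-\calv_0)}$, and in particular $\lambda^2$, is a factor of $P(\calh-\calv_0,\lambda)$. Consequently the entire sum over this complementary set is divisible by $\lambda^2$. Writing the full sum as the restricted sum (\ref{main4-cor3-eq1}) plus this $\lambda^2$-divisible remainder, I conclude that the full sum is divisible by $\lambda^2$ if and only if (\ref{main4-cor3-eq1}) is; combining with Proposition~\ref{main4} yields the stated equivalence.

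I do not expect any genuine obstacle: the argument is a bookkeeping refinement of an already-proved proposition. The only point that warrants a moment's care is verifying that a disconnected $\calh-\calv_0$ truly has at least two components, so that Corollary~\ref{pro3-co2} supplies the full power $\lambda^2$ rather than merely $\lambda$; this is immediate from the component definition, under which isolated vertices are themselves components.
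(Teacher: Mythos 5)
Your proposal is correct and follows exactly the paper's own argument: invoke Proposition~\ref{main4}, split the index set $\cali_{(\calv_1,\calv_2)}(\calh)$ into $\cali'_{(\calv_1,\calv_2)}(\calh)$ and its complement, and discard the complementary terms because each such $\calh-\calv_0$ is disconnected and hence $P(\calh-\calv_0,\lambda)$ is divisible by $\lambda^2$ via Corollary~\ref{pro3-co2}. There is nothing to add.
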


We are now going to prove Theorem~\ref{main4-0}.

\noindent {\it Proof of Theorem~\ref{main4-0}}: 
(i) First assume that $F(\calh)=\{w\}$.
As $|\cale|\ge 2$ and $\calh$ is Sperner, $|\calv|\ge 3$.
Then $\calh$ is separable at $w$.
Assume that $\calv_1$ and $\calv_2$ are 
proper subsets  of $\calv$ such that 
$\calv_1\cap \calv_2=\{w\}$ and 
$\calv_1\cup \calv_2=\calv$.
As $w$ is the only member in $F(\calh)$, 
$\calv-\{u\}\notin \cali(\calh)$ for every $u\in \calv-\{w\}$.
Thus, for each $\calv_0\in \cali(\calh)$ with $w\in \calv_0$,
we have $|\calv_0|\le |\calv|-2$ and so
$\calh-\calv_0$ is an empty graph of order at least $2$,
implying that $\lambda^2$ is a factor of 
$P(\calh-\calv_0,\lambda)$.
By Proposition~\ref{pro3} or Proposition~\ref{main4},
$(\lambda-1)^2$ is a factor of $P(\calh,\lambda)$.

Now consider the case that $k=|F(\calh)|\ge 2$.
By (\ref{e1-0}), we have 
\begin{equation}\relabel{main4-0-eq0}
P(\calh,\lambda)=P(\calh+F(\calh),\lambda)
+P(\calh\cdot F(\calh),\lambda).
\end{equation}
As $F(\calh)\subseteq e$ for each 
$e\in \cale$, Proposition~\ref{pro1} implies that 
\begin{equation}\relabel{main4-0-eq00}
P(\calh+F(\calh),\lambda)=
P(\calh_0,\lambda)=
\lambda^{|\calv|-k}(\lambda^k-\lambda),
\end{equation}
where $\calh_0$ is the hypergraph with vertex set $\calv$ 
and edge set $\{F(\calh)\}$.
By (\ref{main4-0-eq0}) and (\ref{main4-0-eq00}),
\begin{equation}\relabel{main4-0-eq1}
P(\calh,\lambda)=
\lambda^{|\calv|-k}(\lambda^k-\lambda)
+P(\calh\cdot F(\calh),\lambda).
\end{equation}
Observe that $\calh\cdot F(\calh)$ is Sperner, connected 
and has as many edges as $\calh$. 
As $\calh\cdot F(\calh)$ has at least two edges and 
$|F(\calh\cdot F(\calh))|=1$,
$P(\calh\cdot F(\calh),\lambda)$ has a factor 
$(\lambda-1)^2$ by the result proved above.
Since $\lambda^{|\calv|-k}(\lambda^k-\lambda)$
does not have a  factor $(\lambda-1)^2$,
(\ref{main4-0-eq1}) implies that 
$P(\calh,\lambda)$ does not have a factor 
$(\lambda-1)^2$. 

(ii) As $F(\calh)=\emptyset$, it is impossible 
that $\calv_i\in \cali(\calh)$ for both $i=1,2$.
By Proposition~\ref{main4},
if $\calv_i\notin \cali(\calh)$ for both $i=1,2$, 
then $(\lambda-1)^2$ is a factor of $P(\calh,\lambda)$.

Now we assume that $\calv_1\in \cali(\calh)$ 
but $\calv_2\notin \cali(\calh)$.
By Proposition~\ref{main4}, 
$(\lambda-1)^2$ is a factor of $P(\calh,\lambda)$
if and only if $\lambda^2$ is a factor of the following 
polynomial:
\begin{equation}\relabel{main4-0-eq2}
\sum_{\calv_0\in \cali_{\calv_1}(\calh)}
P(\calh-\calv_0,\lambda),
\end{equation}
where 
$\cali_{\calv_1}(\calh)$ is the set of 
those $\calv_0\in \cali(\calh)$ 
with $\calv_1\subseteq \calv_0$.
Observe that 
\begin{equation}\relabel{main4-0-eq3}
\sum_{\calv_0\in \cali_{\calv_1}(\calh)}
P(\calh-\calv_0,\lambda)
=\sum_{\calv_1\cup \calv'\in \cali(\calh)
\atop \calv'\subseteq \calv_2-\{w\}}
P(\calh-(\calv_1\cup \calv'),\lambda).
\end{equation}
Let $\calh_0$ denote the hypergraph $\calh\cdot \calv_1$
and let $w_0$ denote the vertex in $\calh_0$ 
which is produced after identifying 
all vertices $\calv_1$ as one. 
Thus the vertex set of $\calh_0$ is 
$(\calv_2-\{w\})\cup \{w_0\}$.
Observe that for any $\calv'\subseteq \calv_2-\{w\}$,
$\calv_1\cup \calv'\in \cali(\calh)$ 
if and only if $\{w_0\}\cup \calv'\in \cali(\calh_0)$,
and  $\calh-(\calv_1\cup \calv')$ is 
exactly the hypergraph $\calh_0-(\{w_0\}\cup \calv')$.
Thus, by (\ref{main4-0-eq3}),
\begin{equation}\relabel{main4-0-eq4}
\sum_{\calv_0\in \cali_{\calv_1}(\calh)}
P(\calh-\calv_0,\lambda)
=\sum_{\calv'\cup\{w_0\}\in \cali(\calh_0)}
P(\calh_0-(\calv'\cup\{w_0\}),\lambda).
\end{equation}
By Proposition~\ref{pro3}, the right-hand side 
of (\ref{main4-0-eq4}) has a factor of $\lambda^2$ 
if and only if $P(\calh_0,\lambda)$ has a factor 
$(\lambda-1)^2$.

Hence (ii) holds.
\hfill $\Box$

For any graph $G$, 
if $G$ has two edges $e_1,e_2$
which have no any common end, 
then $\calhd{G}$ is separable at vertex $w$ which 
is the vertex not in $G$ and 
$\cali_{(\calv_1,\calv_2)}(\calhd{G})=\emptyset$
for suitable $\calv_1,\calv_2$ with $e_i\subseteq \calv_i$
for $i=1,2$,
implying that 
$P(\calhd{G},\lambda)$ has a factor 
$(\lambda-1)^2$ 
by Proposition~\ref{main4}.

By Theorem~\ref{main4-0}, we can easily get 
examples of separable hypergraphs $\calh$ 
whose chromatic polynomials don't have a factor $(\lambda-1)^2$.
Let $\calh$ be a hypergraph with vertex set 
$\{w\}\cup \{x_i: 1\le i\le s\}\cup 
\{y_j: 1\le j\le t\}$ and edge set 
\begin{equation}\relabel{main4-0-eq5}
\{\{y_j: 1\le j\le t\}\} 
\cup
\{\{w, x_1,x_2,\cdots,x_s,y_j\}: 1\le j\le t\},
\end{equation}
where $s\ge 1$ and $t\ge 1$.
Then $\calv_1=\{w\}\cup \{x_i: 1\le i\le s\}$ is a member 
of $\cali(\calh)$ while 
$\calv_2=\{w\}\cup \{y_j: 1\le j\le t\}$ is not.
By Theorem~\ref{main4-0}, $P(\calh,\lambda)$ has a factor 
$(\lambda-1)^2$ if and only if 
$P(\calh\cdot \calv_1,\lambda)$ has a factor 
$(\lambda-1)^2$.
Observe that $\calh\cdot \calv_1$ is a 
hypergraph with vertex set 
$\{w\}\cup \{y_j: 1\le j\le t\}$ and edge set 
\begin{equation}\relabel{main4-0-eq6}
\{\{y_j: 1\le j\le t\}\} \cup
\{\{w,y_j\}: 1\le j\le t\}.
\end{equation}
By Corollary~\ref{pro3-co1}, 
\begin{equation}\relabel{main4-0-eq7}
P(\calh\cdot \calv_1,\lambda)
=\lambda P(\calh-\calv_1,\lambda-1)
=\lambda ((\lambda-1)^t-(\lambda-1)),
\end{equation}
which does not have a factor $(\lambda-1)^2$.

Theorem~\ref{main4-0} actually provides 
a method of producing a separable hypergraph $\calh$ 
from a given hypergraph $\calh'$ such that 
$P(\calh,\lambda)$ has a factor $(\lambda-1)^2$
if and only if $P(\calh',\lambda)$ has a factor $(\lambda-1)^2$.
For any connected and Sperner 
hypergraph $\calh'$ with at least two edges
and any vertex $w$ in $\calh'$,
let $\calh$ be a connected hypergraph 
with vertex set 
$\calv(\calh')\cup S$, where $S$ is a non-empty set,
and edge set 
\begin{equation}\relabel{main4-0-eq8}
(\cale(\calh')-\cale')\cup 
\{e\cup S_e: e\in \cale', S_e\subseteq S\},
\end{equation}
where $\cale'$ is a set of some edges $e\in \cale(\calh')$ 
with $w\in e$ and $S_e$ is any subset of $S$.
As $\calh$ must be connected, 
$\bigcup_{e\in \cale'} S_e=S$.
By Theorem~\ref{main4-0},  
$P(\calh,\lambda)$ has a factor $(\lambda-1)^2$
if and only if $P(\calh',\lambda)$ has a factor $(\lambda-1)^2$.

\section{Conclusions and further research}

It is well known that for any simple graph $G$,
the multiplicity of root ``$0$'' of 
$P(G,\lambda)$ is equal to the number of components 
of $G$ (\cite{eis1972, rea1}).
However, Theorem~\ref{main3} shows that 
for a connected hypergraph $\calh$, 
$P(\calh,\lambda)$ may have a factor $\lambda^2$,
implying that  the multiplicity of root ``$0$'' of 
the chromatic polynomial of a hypergraph can be as large as 
twice the number of components of this hypergraph. 
But we don't know what is the largest possible 
multiplicity of root ``$0$'' of $P(\calh, \lambda)$
and the relation between 
the multiplicity of root ``$0$''  of 
$P(\calh, \lambda)$ and the structure of $\calh$.
Thus we propose the following problem 
regarding the multiplicity of root ``$0$'' of 
$P(\calh,\lambda)$ for a hypergraph $\calh$. 

\begin{Pro} \relabel{prob2}
Let $\calh$ be a connected hypergraph. 
\begin{enumerate}
\item[(a)] What is a necessary and 
sufficient condition for $P(\calh,\lambda)$
to have a factor $\lambda^2$?

\item[(b)] Is it possible that 
the multiplicity of root ``$0$'' of 
$P(\calh,\lambda)$
is larger than $2$
for some non-separable hypergraph $\calh$
?

\item[(c)] 
What is the relation between the multiplicity of root ``$0$'' of 
$P(\calh,\lambda)$ 
and the structure of $\calh$?
\end{enumerate}
\end{Pro}

Note that for a bridge $e$ in a connected hypergraph 
$\calh$, by Theorem~\ref{t1}, 
$P(\calh,\lambda)$ has a factor $\lambda^2$ 
if and only if 
$P(\calh/e,\lambda)$ has a factor $\lambda^2$.
Thus the study of Problem \ref{prob2}(a) can be focused 
on those connected hypergraphs without 
bridges.

It is also well known that for a connected graph $G$, 
$(\lambda-1)^2$ is a factor of $P(G,\lambda)$ 
if and only if $G$ is separable (i.e., 
$G$ has a cut-vertex), and 
the multiplicity of root ``$1$'' of 
$P(G,\lambda)$ is equal to the number of blocks of $G$
 (\cite{whi1984, woo1977}).
However, such result does not hold for hypergraphs.
Theorem~\ref{main4-0} shows that 
$(\lambda-1)^2$ is a factor of $P(\calh,\lambda)$
for some but not all 
connected and separable hypergraphs $\calh$.
For connected but non-separable hypergraphs, 
Theorem~\ref{main3} and Corollary~\ref{pro3-co1} also imply 
their chromatic polynomials 
may have a factor $(\lambda-1)^2$.
Regarding the multiplicity of root ``$1$'' of 
$P(\calh,\lambda)$ for a connected hypergraph $\calh$, 
we propose the following problem.

\begin{Pro}  \relabel{prob3}
Let $\calh$ be a connected hypergraph. 
\begin{enumerate}
\item[(a)] 
What is a necessary and 
sufficient condition for $P(\calh,\lambda)$
to have a factor $(\lambda-1)^2$?
\item[(b)]
What is the relation between the multiplicity of root ``$1$'' of 
$P(\calh,\lambda)$ 
and the structure of $\calh$?
\end{enumerate}
\end{Pro}

Properties (B.1) and (B.2) in Section 1 
imply that $P(G,\lambda)$ does not have negative 
real roots for every graph $G$.
This property was only extended to 
a set of linear hypergraphs by Dohmen~\cite{Dohmen 1995}
who showed that 
$P(\calh,\lambda)$ have no negative real roots 
if each edge in a linear hypergraph $\calh$ has an even size 
and each cycle in $\calh$ has an edge of size $2$,
although Corollary~\ref{mainco1} tells that 
chromatic polynomials of hypergraphs have dense 
roots in the interval $(-\infty,0)$.
It is possible that Dohmen's result in \cite{Dohmen 1995} 
can be extended to a larger family of hypergraphs.
Now we propose the following problem
on negative real roots of chromatic polynomials
of hypergraphs.

\begin{Pro}  \relabel{prob4}
Let $\calh$ be a connected hypergraph of order $n$. 
\begin{enumerate}
\item[(a)] 
What is a necessary and 
sufficient condition for $P(\calh,\lambda)$
to have no negative roots?
\item[(b)] Is it true that if $\calh$ contains 
even-size edges only, then $P(\calh,\lambda)$
has no negative roots?
\item[(c)] Determine a function $f(n)$ 
such that $P(\calh,\lambda)$ has no roots 
in $(-\infty, f(n))$.
\end{enumerate}
\end{Pro}

The study of chromatic polynomials of planar graphs 
is very important in the topic of 
chromatic polynomials of graphs.
We will end this paper with some problems on 
the chromatic polynomials of planar hypergraphs. 
The planarity of hypergraphs has been studied by some researchers 
such as Heise, Panagiotou, Pikhurko and Taraz ~\cite{hei 2014}, 
Johnson and Pollak~\cite{Joh 1987},
Verroust and Viaud~\cite{ver 2004}, 
Zykov~\cite{zyk2}, etc.
Zykov~\cite{zyk2} and 
Johnson and Pollak~\cite{Joh 1987}
gave different definitions for planar hypergraphs.
Here we take Zykov's planarity which seems more natural. 
His definition associates edges in a hypergraph with 
faces in a plane graph. 

\begin{D}[\cite{zyk2}] \relabel{d1}
Let $\calh=(\calv,\cale)$ be a hypergraph.
If there is a plane graph $G$ with vertex set $\calv$ 
such that for each $e\in \cale$, 
$e$ is the set of vertices in some face of $G$, 
then $\calh$ is said to be {\it Zykov-planar}. 
\end{D}

For example, if $\calh=(\calv,\cale)$ is a hypergraph
with $\calv=\{a,b,c,d,f,g,x,y\}$ and 
$\cale=\{e_1,e_2,e_3,e_4,e_5\}$, 
where $e_1=\{a,b,c\}$, $e_2=\{c,d,y\}$,
$e_3=\{b,c,f,g\}$, $e_4=\{f,g,y\}$
and $e_5=\{x,y,h\}$, then $\calh$ is Zykov-planar, 
as its edges are sets of vertices of some 
faces in the plane graph shown in Figure~\ref{f3}.

\begin{figure}[htbp]
  \centering
\includegraphics[scale=0.6]{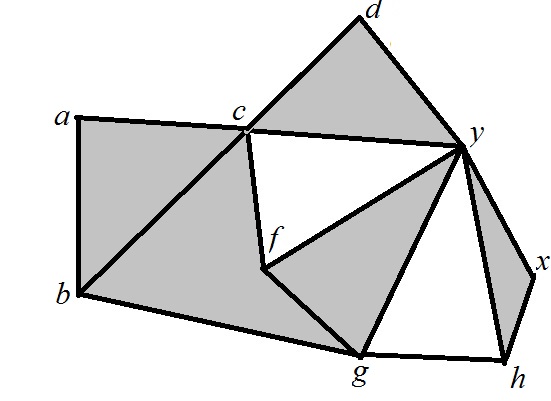}
\caption{A plane graph}

\relabel{f3}
\end{figure}

Johnson and Pollak~\cite{Joh 1987} showed that 
a hypergraph $\calh=(\calv,\cale)$ 
is Zykov-planar if and only if 
the bipartite graph 
with vertex set $\calv\cup \cale$ and edge set 
$\{ve: v\in e\in \cale\}$ is planar. 

For any planar graph $G$ without loops, 
Birkhoff and Lewis \cite{birk1946} showed that 
$P(G,\lambda)>0$ holds for all real $\lambda\ge 5$,
implying that $P(G,\lambda)$ has no real roots 
in the interval $[5,\infty)$. 
They also conjectured in the same paper that $P(G,\lambda)>0$ holds for all real $\lambda$ with 
$4\le \lambda<5$.
By the Four-color Theorem~\cite{app, rob} and Johnson and Pollak's 
characterization on Zykov-planar hypergraphs in~\cite{Joh 1987},
every Zykov-planar hypergraph has a weak proper colouring
with 4 colours, i.e.,  
$P(\calh,4)>0$ holds for every Zykov-planar hypergraph $\calh$.
Then it is natural to ask if 
$P(\calh,\lambda)>0$ holds for all real numbers 
$\lambda$ with $\lambda>4$
and all Zykov-planar hypergraphs $\calh=(\calv,\cale)$ with 
$|e|\ge 2$ for all $e\in \cale$.

Thomassen~\cite{tho2} proved that 
the real roots of chromatic polynomials of planar graphs 
are dense in the interval $(32/27,3)$.
In the same paper, he conjectured that 
the real roots of chromatic polynomials of 
planar graphs are dense in the interval 
$[3,4)$. 
Recently Perret and Thomassen~\cite{per 2016}
proved that this conjecture holds 
except for a small interval $(t_1,t_2)$ around 
the number $\frac{5+\sqrt 5}2\approx 3.618033$,
where $t_1\approx 3.618032$ and $t_2 \approx 3.618356$.
Is it true that the real roots of chromatic polynomials 
of Zykov-planar hypergraphs $\calh$ with
$|e|\ge 3$ for some edge $e$ in $\calh$  
are dense in the interval $(32/27, 4)$?

\vspace{0.5 cm}

\noindent {\bf Acknowledgement.} 
The authors wish to thank the referees 
for their very helpful comments and suggestions.

\end{document}